\newtheorem{theorem}{Theorem}
\title{\LARGE \bf
On the Consistency of Maximum Likelihood Estimators for Causal Network Identification}
\author{Xiaotian Xie, Dimitrios Katselis, Carolyn L. Beck and R. Srikant 
\thanks{Xiaotian Xie, Carolyn L. Beck and R. Srikant are with the Coordinated Science Laboratory, University of Illinois at Urbana-Champaign, Urbana, IL 61801, Emails: $\{\text{xx5}|\text{beck3}|\text{rsrikant}\}$@illinois.edu.
      }%
\thanks{Dimitrios Katselis is with the ECE Department, University of Illinois at Urbana-Champaign, Urbana, IL 61801, Email: $\text{katselis}$@illinois.edu.}      
}
\begin{document}

\maketitle
\thispagestyle{empty}
\pagestyle{empty}

\begin{abstract}
We consider the problem of identifying parameters of a particular class of Markov chains, called Bernoulli Autoregressive  (BAR) processes. The structure of any BAR model is encoded by a directed graph. Incoming edges to a node in the graph indicate that the state of the node at a particular time instant is influenced by the states of the corresponding parental nodes in the previous time instant. The associated edge weights determine the corresponding level of influence from each parental node. In the simplest setup, the Bernoulli parameter of a particular node's state variable is a convex combination of the parental node states in the previous time instant and an additional Bernoulli noise random variable. 
This paper focuses on the problem of edge weight identification using Maximum Likelihood (ML) estimation and proves that the ML estimator is strongly consistent for two variants of the BAR model. 
We additionally derive closed-form estimators for the aforementioned two variants and prove their strong consistency. 
\end{abstract}

\section{INTRODUCTION}

The spreading of ideas and information, the propagation of viruses and diseases, and the fluctuation of stock prices are examples of processes evolving over social, information or other types of networks \cite{barrat2008,Neely2010,AcemogluDLO2011,JadbabaieMST2012,daneshmand2014estimating,quinn2015directed, pouget2015inferring,NowzariPP2016}. Identifying the underlying network structure in these systems motivates the so-called \textit{network inference problem}, which aims at recovering the underlying connectivity between entities or nodes in the system based on observed data. The dependencies, correlations or causal relationships between network entities can be modeled as undirected or directed edges in a graph. The associated dependency strengths can be described as edge weights. 
Many algorithms have been proposed to identify the network structure and edge weights from time series data for various processes. Clearly, efficient algorithms in terms of sample complexity are desired. 


The network inference problem for various dynamic processes has been recently studied in both the machine learning literature and the system identification literature. Among the relevant studies in the machine learning community, the so-called continuous-time independent cascade model (CICE) considered in \cite{gomez2012inferring, abrahao2013trace, daneshmand2014estimating}, presents a typical model for capturing the dynamics of virus or information spreading in networks. A discrete-time version of CICE is studied in \cite{netrapalli2012learning}. In \cite{pouget2015inferring}, the Generalized Linear Model is formulated, which is a class of diffusion models encompassing both the discrete and continuous-time CICE models, and the Linear Voter model.

System identification is a well-studied model estimation approach in the context of control theory; e.g., see \cite{SoderstromStoica89,ljung1998system,VerVer2007} and references therein. The traditional task of system identification focuses on  estimating system parameters from measured input and output data. 
Among recent emerging applications in the system identification area, two main extensions are relevant. The first involves cases where the state matrix of a state-space model represents a directed graph, in other words, the dynamic process can be factorized into subprocesses for each node \cite{materassi2010, materassi2013model,chiuso2012bayesian, seneviratne2012topology,VandenHofDHB2013}. In such cases, system identification methods are found effective in tackling dynamic network inference problems. The second extension is to consider state vectors evolving in discrete state spaces. 

In this paper, we consider the Bernoulli Autoregressive (BAR) model, which is a parameterized discrete-time Markov chain initially introduced in \cite{katselis2018mixing}. In this model, the state of each node is a Bernoulli random variable with probability of success equal to a convex combination of the parental node states (or their flipped states) in the previous time step and an additional binary noise term ensuring persistence of excitation. The BAR model can be used to approximate opinion dynamics, biological and financial times series, and similar processes \cite{katselis2018mixing}. Another relevant discrete-time binary process is the ALARM model proposed in \cite{agaskar2013alarm}. In contrast to the BAR model, the ALARM model defines the transition probabilities via a logistic function. 

Relying on well-established statistical principles, we first formulate and study the consistency properties of the Maximum Likelihood (ML) parameter estimator for the BAR model in which every parental node causally influences each descendant node positively; the notion of positive correlations is formalized in \cite{katselis2018mixing}. The consistency of ML estimators in the case of independent and identically distributed (i.i.d.) random variables has been studied extensively; see e.g., \cite{moulin2018,poor2013introduction} and references therein. The consistency of ML estimators for Markov chains appears to be less well studied, see \cite{ranneby1978necessary} for a reference.

To establish the (strong) consistency of the ML estimator for the BAR model, we prove that the vectorized transition probability matrix is an injective mapping of the model parameters. In the rest of the paper, we call the injectivity of this mapping \emph{identifiability} of the BAR model.  The strong consistency of the ML estimator is then shown by leveraging the injectivity and the continuity of the transition probabilities with respect to the parameters, as well as the compactness of the parameter space. By relying on the ML principle, a closed-form estimator is subsequently provided. Strong consistency is also shown to hold for this estimator.  The identifiability  proof is then extended to the \emph{generic} BAR model with both positive and negative correlations, where the notion of negative correlations is also formalized in \cite{katselis2018mixing}. This identifiability extension establishes the strong consistency of the ML estimator for the general BAR model class. The closed-form estimator and its consistency are also extended to the \emph{generic} BAR model. These analytical results provide a complement to the prior work \cite{katselis2018mixing}.  Finally, numerical simulations are provided to demonstrate the sample complexity gain achieved by the derived estimators in this paper over other existing algorithms in the literature for the BAR model when focusing on the structure identification subproblem. We note here that structural inference is an identification subproblem that can be tackled by parameter estimation in processes with underlying network structures.         

The rest of this paper is organized as follows. In Section \ref{sec:BAR Model}, the BAR model with positive correlations only is introduced. In Sections \ref{sec:ML Estimation} and \ref{sec:closed form}, the identifiability of the BAR model with positive correlations only and the strong consistency of the corresponding ML estimator, as well as a closed-form estimator and its strong consistency are derived. The generic BAR model with positive and negative correlations and also, the identifiability and strong consistency of the corresponding ML estimator are provided in Section \ref{sec: General BAR}. The extension of the closed-form estimator to the generic BAR model and its strong consistency are derived in Section \ref{sec: closed form 2}. Finally, simulation results are provided in Section \ref{sec:sims} and  Section \ref{sec:concl} concludes the paper. 

\textbf{Notation}: Matrices and vectors are denoted by bold upper and lowercase letters, respectively. Probability distributions in vector form may be either denoted by bold upper or lowercase letters. Random vectors are also denoted by uppercase bold letters, while their corresponding realizations are denoted by lowercase bold letters. Scalar random variables are denoted by uppercase letters. The $i$-th entry of a vector $\mathbf{x}$ is denoted by $x_i$. For a matrix $\mathbf{A}$, $a_{ij}$ corresponds to its $(i,j)$-th entry. Depending on the context, vector and matrix entries may be indexed more generally, e.g., by state elements.  $\mathbf{1}_m$ and $\mathbf{0}_m$ are the $m\times 1$ all-ones and all-zeros vectors, respectively, and $\mathbf{0}_{m\times n}$ is the all-zeros $m\times n$ matrix. $\mathbf{I}_m$ is the $m\times m$ identity matrix. Moreover, $\mathbf{e}_{m,i}$ is the $i$-th column of $\mathbf{I}_m$.
The cardinality of a set $\mathcal{V}$ is denoted by $\left|\mathcal{V}\right|$. For  $m\in\mathbb{N}$, $[m]=\{1,2,\dots, m\}$. Finally, $\mathbb{I}(\cdot)$ stands for the indicator of a set or an event. 
\section{THE BAR MODEL WITH POSITIVE CORRELATIONS}
\label{sec:BAR Model}

The BAR model is a special form of a Markov chain defined on a directed graph $\mathcal{G}=\left(\mathcal{V},\mathcal{E}\right)$ with $\left|\mathcal{V}\right|=p$ nodes. Let $X_i(k)\in \{0,1\}$ be the state of node $i\in [p]$ at time instant $k$ and let $\mathbf{X}(k)\in \{0,1\}^p$ be the associated BAR process state vector at the same time instant. The most natural BAR model, with positive correlations only, is described by
\begin{equation}\label{eq: BAR_def}
    X_i(k+1) \sim  \textnormal{Ber}\left(\mathbf{a}_i^\top\mathbf{X}(k)+b_iW_i(k+1)\right),\ \ \ i=1,\ldots,p,
\end{equation}
where $\mathbf{a}_i\in [0,1]^p, b_i\in [0,1], i=1,\ldots, p$ are parameters of the BAR model and $\textnormal{Ber}(\rho)$ represents the Bernoulli distribution with parameter $\rho$. Additionally, $\{W_i(k+1)\sim\text{Ber}(\rho_{w_i})\}_{i=1}^p$
are independent noise random variables, also independent of $\mathbf{X}(t)$ for any $t<k+1$, where $\rho_{w_i}\in[\rho_{min},\,\rho_{max}]$ for all $i\in[p]$ with $0 <\rho_{min}<\rho_{max}< 1$. Moreover, the initial distribution is  $P_{\mathbf{X}(0)}$, i.e., $\mathbf{X}(0) \sim P_{\mathbf{X}(0)}$. The interpretation here is that the entries of $\mathbf{X}(k+1)$ are conditionally independent Bernoulli random variables given $\mathbf{X}(k)$.

To ensure that the Bernoulli random variables in (\ref{eq: BAR_def}) are well-defined, we require that 
\begin{equation}\label{eq: BAR_const}
    \sum_{j=1}^p a_{ij} + b_i = 1, \quad \forall i\in[p].
\end{equation}
For persistent excitation, we further assume that $b_{i}\geq b_{min},\forall i\in [p]$, where $ b_{min}\in(0,1)$ is a constant. Notice that if $b_i=0$ for all $i\in[p]$, the BAR Markov chain will get absorbed in $\mathbf{0}_p$ or $\mathbf{1}_p$ upon visiting the state $\mathbf{0}_p$ or $\mathbf{1}_p$, respectively.

Furthermore, we assume that $\mathbf{a}_i$ encodes a part of the graph structure through the equivalence 
\begin{equation}\label{eq: edge}
    \left(j,i \right)\in\mathcal{E} \iff a_{ij}>0, \quad\forall i,j \in [p],
\end{equation}
where the ordered pair $(j,i)$ denotes a directed edge from node $j$ to node $i$. The notion of positive correlations in (\ref{eq: BAR_def}) relies on the fact that $a_{ij}>0$  increases the probability of the event $\{X_{i}(k+1)=1\}$ when $X_j(k)=1$. A more general form of the BAR model with both positive and negative correlations is introduced in Section \ref{sec: General BAR}.

We now let $\mathbf{A}=\left[\mathbf{a}_1,\, \mathbf{a}_2,\, \cdots, \mathbf{a}_p \right]^T$, i.e., $\mathbf{a}_r^T$ corresponds to the $r$-th row of $\mathbf{A}$, $\mathbf{b}=\left[b_1,\, b_2,\, \cdots, b_p \right]^T$, $\mathbf{W}=\left[W_1,\, W_2,\, \cdots, W_p \right]^T$ and $\rho_w=\left[\rho_{w_1},\, \rho_{w_2},\, \cdots, \rho_{w_p} \right]^T$. 
We note that $\{\mathbf{X}(k)\}_{k\geq 0}$ is an irreducible and aperiodic Markov chain with finite state space $\{0,1\}^p$. Moreover, for any vectors $\mathbf{u},\mathbf{v}\in \{0,1\}^p$ 
\begin{align}\label{eq: tranProb}
   p_{\mathbf{uv}}&=P\left(\mathbf{X}(k+1)=\mathbf{v}|\mathbf{X}(k)=\mathbf{u}\right)  \nonumber\\
   &= E_{\mathbf{W}}\left[P\left(\mathbf{X}(k+1)=\mathbf{v}|\mathbf{X}(k)=\mathbf{u},\mathbf{W}\right) \right]\nonumber\\
   &=\prod_{i=1}^p \left[\mathbf{a}_i^\top \mathbf{u} + \rho_{w_i} b_i \right]^{v_{i}} \left[1-\mathbf{a}_i^\top \mathbf{u} - \rho_{w_i} b_i \right]^{1-v_{i}}
\end{align}
specifies the transition probability from state $\mathbf{u}$ to state $\mathbf{v}$.
We denote by  $\pi\in \mathbb{R}^{2^p}$ the associated stationary distribution with component $\pi_{\mathbf{u}}$ corresponding to the state $\mathbf{u} \in \{0,1\}^p$ and by $\mathbf{P}=\left(p_{\mathbf{u}\mathbf{v}}\right) \in \mathbb{R}^{2^p\times 2^p}$ the BAR transition probability matrix. 

The goal is to recover the model parameters from an observed sequence $\{\mathbf{X}(k)=\mathbf{x}(k)\}_{k=0}^{T}$.
Clearly, by inferring $\mathbf{A}$, estimates of $\mathbf{b}$ and  the underlying network structure  are direct per (\ref{eq: BAR_const}) and (\ref{eq: edge}), respectively.

\section{MAXIMUM LIKELIHOOD ESTIMATION}
\label{sec:ML Estimation}

In this section, we consider recovering the BAR model parameters via ML estimation and we establish the strong consistency of the ML estimator.
 Suppose that $\{\mathbf{x}(k)\}_{k=0}^{T}$ is a  sequence of observations generated by the BAR model (\ref{eq: BAR_def}). Let $\theta=(\mathbf{A},\mathbf{b}, \rho_{w})$ with the implicit relationship $\mathbf{b}=\mathbf{1}_p-\mathbf{A1}_p$. Clearly, $\mathbf{b}$ is a redundant parameter, but it is preserved here to facilitate the subsequent analysis. From (\ref{eq: tranProb}) the rescaled log-likelihood function is given by
\begin{equation}\label{eq: log_likelihood_1}
\begin{split}
    L_T(\theta) &=\frac{1}{T}\sum_{k=0}^{T-1} \log P\left(\mathbf{x}(k+1)|\mathbf{x}(k); \theta\right)+\frac{1}{T}\log P_{\mathbf{X}(0)}(\mathbf{x}(0);\mathbf{\theta})\\
    &=\frac{1}{T} \sum_{k=0}^{T-1} \sum_{i=1}^p \Bigg[ x_i(k+1)\log \left(\mathbf{a}_i^\top \mathbf{x}(k)+ \rho_{w_i} b_i\right) \\
    &+ \left( 1-x_i(k+1)\right)\log \left(1-\mathbf{a}_i^\top \mathbf{x}(k)- \rho_{w_i} b_i\right) \Bigg]\\
    & +\frac{1}{T}\log P_{\mathbf{X}(0)}(\mathbf{x}(0);\mathbf{\theta}).
\end{split}
\end{equation}
In the rest of the paper, we assume that $P_{\mathbf{X}(0)}$ is independent of the model parameters, which is well-aligned with the realistic scenario of arbitrarily initializing the Markov chain.

For any states $\mathbf{u}$, $\mathbf{v}\in \{0,1\}^p$, we denote by $N_{\mathbf{u}\mathbf{v}}$ the number of one-step transitions from state $\mathbf{u}$ to state $\mathbf{v}$ in the observed sequence and  we let $N_{\mathbf{u}}= \sum_{\mathbf{v}} N_{\mathbf{u}\mathbf{v}}$ be the amount of time spent in  state $\mathbf{u}$ in a horizon of $T$ time steps. Then (\ref{eq: log_likelihood_1}) can be also written as
\begin{equation}\label{eq:log_likelihood_1_new}
    L_T(\theta) = \sum_{\mathbf{u},\,\mathbf{v}}\frac{N_{\mathbf{u}\mathbf{v}}}{T}\log p_{\mathbf{u}\mathbf{v}}(\theta)+\frac{1}{T}\log P_{\mathbf{X}(0)}(\mathbf{x}(0)).
\end{equation}
Let $\theta_0$ be the true  parameter tuple $(\mathbf{A},\mathbf{b},\rho_w)$. An application of the Ergodic Theorem \cite{b13} for Markov chains  reveals that \[L_T(\theta_0)\xrightarrow[T\rightarrow \infty]{\rm a.s.} \sum_{\mathbf{u},\mathbf{v}}\pi_{\mathbf{u}}p_{\mathbf{uv}}(\theta_0)\log p_{\mathbf{uv}}(\theta_0),\] which is the negative of the entropy rate of the corresponding BAR chain with parameter set  $\theta_0$ and is always finite since the BAR model has a finite state space.   

Let $\theta_0\in \Theta$, where $\Theta$ is a compact set appropriately defined on the basis of (\ref{eq: BAR_const}) and additional assumptions in Section \ref{sec:BAR Model}. 
The ML estimator $\hat{\theta}_T$ of $\theta_0$ satisfies
\begin{equation}\label{eq:MLEstimatorDef}
     \hat{\theta}_{T} \in \arg\max_{\theta\in \Theta}\,\, T\cdot L_T(\theta) = \arg\max_{\theta\in \Theta}\,\, L_T(\theta).
\end{equation}
In the rest of this section, we will show the strong consistency of $\hat{\theta}_T$.
The key idea of the proof is along the lines of the proof of Theorem 2.1 in \cite{ranneby1978necessary} using techniques for general discrete-time Markov chains. To summarize, we first prove that $\mathbf{P}(\hat{\theta}_T)\xrightarrow[T\rightarrow \infty]{\rm a.s.}
\mathbf{P}(\theta_0)$. To establish that $\hat{\theta}_T\xrightarrow[T\rightarrow \infty]{\rm a.s.} \theta_0$, we then show that the vector-valued mapping $\mathbf{p}:\Theta\rightarrow \mathbb{R}^{2^{2p}}$ defined as $\mathbf{p}(\theta)=\mathrm{vec}(\mathbf{P}(\theta))$ is injective, i.e.,
\begin{equation}\label{eq: injectivity}
   \forall \theta, \theta' \in \Theta,\ \ \   \theta \neq \theta'\implies \mathbf{p}(\theta)\neq\mathbf{p}(\theta').
\end{equation} 
Here, $\mathrm{vec}(\cdot)$ denotes the vectorization of a matrix.
 Finally, we complete the proof by leveraging the compactness of the parameter space $\Theta$ and the continuity of the components of $\mathbf{p}(\theta)=\mathrm{vec}(\mathbf{P}(\theta))$ or equivalently, of the transition probabilities with respect to the model parameters. 

\textbf{Remark}: In the following, we will say that the BAR model is \emph{identifiable} when (\ref{eq: injectivity}) holds.

The main result of this section can be now stated.
\begin{theorem}\label{thm: consistency}
The ML estimator $\hat{\theta}_T$ of $\theta_0$, defined in (\ref{eq:MLEstimatorDef}), for the BAR model in (\ref{eq: BAR_def}) is strongly consistent.
\end{theorem}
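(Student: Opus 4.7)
The plan is to follow the three-step roadmap already outlined in the text. First I would use the Ergodic Theorem for the irreducible, aperiodic, finite-state BAR chain to show that, for every fixed $\theta \in \Theta$, $L_T(\theta) \to L(\theta, \theta_0) := \sum_{\mathbf{u},\mathbf{v}} \pi_{\mathbf{u}} p_{\mathbf{uv}}(\theta_0) \log p_{\mathbf{uv}}(\theta)$ almost surely; the initial-distribution term vanishes thanks to the $1/T$ rescaling. By Gibbs' inequality, $L(\theta_0, \theta_0) - L(\theta, \theta_0) = \sum_{\mathbf{u}} \pi_{\mathbf{u}}\, D_{\mathrm{KL}}\left(p_{\mathbf{u}\cdot}(\theta_0) \,\|\, p_{\mathbf{u}\cdot}(\theta)\right) \geq 0$, with equality if and only if $\mathbf{P}(\theta) = \mathbf{P}(\theta_0)$, since irreducibility gives $\pi_{\mathbf{u}} > 0$ for every $\mathbf{u}$. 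Starting from $L_T(\hat\theta_T) \geq L_T(\theta_0)$ and passing to any subsequence along which $\hat\theta_{T_n} \to \theta^\ast$ (which exists by compactness of $\Theta$), uniform boundedness of $\log p_{\mathbf{uv}}(\theta)$ on $\Theta$, which is guaranteed because $b_i \geq b_{\min}$ and $\rho_{w_i} \in [\rho_{\min}, \rho_{\max}]$ force each $\mathbf{a}_i^\top \mathbf{u} + \rho_{w_i} b_i$ into a closed subinterval of $(0,1)$, together with continuity of $p_{\mathbf{uv}}(\cdot)$ then yields $\mathbf{P}(\theta^\ast) = \mathbf{P}(\theta_0)$.

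The main obstacle, specific to the BAR structure, is identifiability. I would leverage the product form in (\ref{eq: tranProb}) to recover the single-coordinate conditionals $P(X_i(k+1) = 1 \mid \mathbf{X}(k) = \mathbf{u}) = \mathbf{a}_i^\top \mathbf{u} + \rho_{w_i} b_i$ by summing the appropriate entries in a given row of $\mathbf{P}(\theta)$. Evaluating at $\mathbf{u} = \mathbf{0}_p$ isolates $\rho_{w_i} b_i$; evaluating at $\mathbf{u} = \mathbf{e}_{p,j}$ subsequently yields each $a_{ij}$; the constraint (\ref{eq: BAR_const}) pins down $b_i$; and finally $\rho_{w_i} = (\rho_{w_i} b_i)/b_i$ is well defined because $b_i \geq b_{\min} > 0$. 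Hence $\theta$ is uniquely determined by $\mathbf{p}(\theta)$, establishing the required injectivity.

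The final step is a standard subsequence contradiction. Suppose, on an event of positive probability, $\hat\theta_T \not\to \theta_0$; by compactness of $\Theta$ some subsequence satisfies $\hat\theta_{T_n} \to \theta^\ast \neq \theta_0$; by continuity of $\mathbf{p}$ and the first step, $\mathbf{p}(\theta^\ast) = \mathbf{p}(\theta_0)$, contradicting injectivity. Therefore $\hat\theta_T \to \theta_0$ almost surely, which is the claimed strong consistency.
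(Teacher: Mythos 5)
Your proposal is correct, but it reaches the conclusion by a genuinely different route in two of the three steps. For the first step the paper never forms the limiting criterion $L(\theta,\theta_0)$: it compares the parametric ML estimator against the nonparametric empirical transition frequencies $\mathbf{Q}_{\mathbf{u}}=\left(N_{\mathbf{u}\mathbf{v}}/N_{\mathbf{u}}\right)_{\mathbf{v}}$, sandwiches the divergences $D_{\rm KL}\bigl(\mathbf{Q}_{\mathbf{u}}\|\mathbf{P}_{\mathbf{u}}(\hat{\theta}_T)\bigr)$ between zero and a quantity that vanishes a.s.\ by the Ergodic Theorem, and then uses Pinsker's inequality to conclude $p_{\mathbf{u}\mathbf{v}}(\hat{\theta}_T)\to p_{\mathbf{u}\mathbf{v}}(\theta_0)$ along the whole sequence, before any subsequence is taken. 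You instead run the classical Wald-type argument (a.s.\ convergence of $L_T$, Gibbs' inequality, evaluation along a convergent subsequence). That works here, but be aware that pointwise a.s.\ convergence of $L_T(\theta)$ for each fixed $\theta$, with a $\theta$-dependent null set, would not by itself justify passing to the limit along the random sequence $\hat{\theta}_{T_n}$; what rescues the argument --- and what you correctly flag --- is that the state space is finite and each Bernoulli argument lies in $[\rho_{\min}b_{\min},\,1-(1-\rho_{\max})b_{\min}]$, so $\log p_{\mathbf{u}\mathbf{v}}(\theta)$ is uniformly bounded and continuous on $\Theta$ and the convergence of $L_T$ is in fact uniform on a single full-measure event; stating this explicitly would close the only loose seam. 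Your identifiability argument is also different from the paper's and arguably cleaner: by marginalizing a row of $\mathbf{P}(\theta)$ you recover the coordinate conditionals $\mathbf{a}_i^\top\mathbf{u}+\rho_{w_i}b_i$, then read off $c_i=\rho_{w_i}b_i$ at $\mathbf{u}=\mathbf{0}_p$, each $a_{ij}$ at $\mathbf{u}=\mathbf{e}_{p,j}$, $b_i$ from the simplex constraint, and $\rho_{w_i}=c_i/b_i$ (legitimate since $b_i\geq b_{\min}>0$), i.e.\ you exhibit an explicit left inverse of $\mathbf{p}$; the paper instead argues by contradiction from the product form (\ref{eq: tranProb}), comparing transitions out of $\mathbf{0}_p$ and $\mathbf{e}_{p,1}$ into two target states differing in one coordinate. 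Your constructive version is essentially the mechanism behind the paper's closed-form estimator (cf.\ (\ref{eq:invariance})) and extends verbatim to the generic model of Section \ref{sec: General BAR}, while the paper's case analysis buys the reader nothing extra here beyond matching its later proofs stylistically. Your final compactness/continuity/subsequence step coincides with the paper's.
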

\begin{proof}  We break up the proof into three parts.
\textbf{\emph{Proof of $\mathbf{P}(\hat{\theta}_T)\xrightarrow[T\rightarrow \infty]{\rm a.s.} \mathbf{P}(\theta_0)$}}: We present a simpler, self-contained proof, following ideas in proof of Theorem 2.1 in \cite{ranneby1978necessary}.

For each $\mathbf{u} \in \{0,1\}^p$, we define the (row) vector $\mathbf{Q}_{\mathbf{u}}=\left(N_{\mathbf{u}\mathbf{v}}/N_{\mathbf{u}}\right)_{\mathbf{v}\in\{0,1\}^p} \in\mathbb{R}^{2^p}$ with the convention $\mathbf{Q}_{\mathbf{u}}=2^{-p}\mathbf{1}_{2^p}^\top$ for $N_{\mathbf{u}}=0$ and we let $\mathbf{P}_{\mathbf{u}}= \left(P_{\mathbf{u}\mathbf{v}}\right)_{\mathbf{v}\in\{0,1\}^p} \in \mathbb{R}^{2^p}$ denote the transition distribution out of state $\mathbf{u}$, which is also a row vector in the transition matrix $\mathbf{P}$. In particular, it is well-known that the set $\{\mathbf{Q}_{\mathbf{u}}\}_{\mathbf{u}\in \{0,1\}^p}$ is the \emph{ML estimator} of the transition matrix $\mathbf{P}$, assuming no further parameterization of the transition probabilities. Consider the fact that the Kullback--Leibler divergence from $\mathbf{P}_{\mathbf{u}}(\hat{\theta}_T)$ to $\mathbf{Q}_{\mathbf{u}}$ is nonnegative, i.e.,
\begin{equation*}
    D_{\rm KL}\left(\mathbf{Q}_{\mathbf{u}}\Big\| \mathbf{P}_{\mathbf{u}}(\hat{\theta}_T)\right) = -\sum_{\mathbf{v}} \frac{N_{\mathbf{u}\mathbf{v}}}{N_{\mathbf{u}}} \log \frac{ p_{\mathbf{u}\mathbf{v}}(\hat{\theta}_T)}{N_{\mathbf{u}\mathbf{v}}/N_{\mathbf{u}}} \geq 0
\end{equation*}
or equivalently,
\begin{equation*}
    \sum_{\mathbf{v}} \frac{N_{\mathbf{u}\mathbf{v}}}{N_{\mathbf{u}}} \log \frac{N_{\mathbf{u}\mathbf{v}}}{N_{\mathbf{u}}} \geq \sum_{\mathbf{v}}  \frac{N_{\mathbf{u}\mathbf{v}}}{N_{\mathbf{u}}} \log p_{\mathbf{u}\mathbf{v}}(\hat{\theta}_T).
\end{equation*}
Multiply both sides of the above inequality by $\frac{N_{\mathbf{u}}}{T}$ and sum over $\mathbf{u}$ to obtain
\begin{equation}\label{eq: thm1_1}
    \sum_{\mathbf{u},\mathbf{v}} \frac{N_{\mathbf{u}\mathbf{v}}}{T} \log \frac{N_{\mathbf{u}\mathbf{v}}}{N_{\mathbf{u}}} \geq \sum_{\mathbf{u},\mathbf{v}} \frac{N_{\mathbf{u}\mathbf{v}}}{T} \log p_{\mathbf{u}\mathbf{v}}(\hat{\theta}_T) \geq \sum_{\mathbf{u},\mathbf{v}} \frac{N_{\mathbf{u}\mathbf{v}}}{T} \log p_{\mathbf{u}\mathbf{v}}(\theta_0)
\end{equation}
where the last inequality is due to (\ref{eq:log_likelihood_1_new}) and the definition of the ML estimator.
From (\ref{eq: thm1_1}), we can further obtain
\begin{equation}\label{eq: thm1_2}
    0\geq  \sum_{\mathbf{u},\mathbf{v}} \frac{N_{\mathbf{u}\mathbf{v}}}{T} \log \frac{p_{\mathbf{u}\mathbf{v}}(\hat{\theta}_T)}{N_{\mathbf{u}\mathbf{v}}/N_{\mathbf{u}}} \geq \sum_{\mathbf{u},\mathbf{v}} \frac{N_{\mathbf{u}\mathbf{v}}}{T} \log \frac{p_{\mathbf{u}\mathbf{v}}(\theta_0)}{N_{\mathbf{u}\mathbf{v}}/N_{\mathbf{u}}}.
\end{equation}
By the Ergodic Theorem for Markov chains, 
\begin{equation} \label{eq: ergodic_thm}
      \sum_{\mathbf{u},\,\mathbf{v}}\frac{N_{\mathbf{u}\mathbf{v}}}{T}\log p_{\mathbf{u}\mathbf{v}}(\theta_0) \xrightarrow[T\rightarrow \infty]{\rm a.s.} \sum_{\mathbf{u},\mathbf{v}} \pi_{\mathbf{u}}(\theta_0) p_{\mathbf{u}\mathbf{v}}(\theta_0) \log p_{\mathbf{u}\mathbf{v}}(\theta_0)
\end{equation}
and also
\begin{equation}\label{eq: thm1_3}
     \sum_{\mathbf{u},\mathbf{v}} \frac{N_{\mathbf{u}\mathbf{v}}}{T} \log \frac{N_{\mathbf{u}\mathbf{v}}}{N_{\mathbf{u}}} \xrightarrow[T\rightarrow \infty]{\rm a.s.} \sum_{\mathbf{u},\mathbf{v}} \pi_{\mathbf{u}}(\theta_0) p_{\mathbf{u}\mathbf{v}}(\theta_0) \log p_{\mathbf{u}\mathbf{v}}(\theta_0).
\end{equation}
By (\ref{eq: ergodic_thm}) and (\ref{eq: thm1_3}) we have that
\begin{equation*}
    \sum_{\mathbf{u},\mathbf{v}} \frac{N_{\mathbf{u}\mathbf{v}}}{T} \log \frac{p_{\mathbf{u}\mathbf{v}}(\theta_0)}{N_{\mathbf{u}\mathbf{v}}/N_{\mathbf{u}}}\xrightarrow[T\rightarrow \infty]{\rm a.s.}  0.
\end{equation*}
This together with (\ref{eq: thm1_2}) yields
\begin{equation}\label{eq: thm1_4}
     \sum_{\mathbf{u},\mathbf{v}} \frac{N_{\mathbf{u}\mathbf{v}}}{T} \log \frac{p_{\mathbf{u}\mathbf{v}}(\hat{\theta}_T)}{N_{\mathbf{u}\mathbf{v}}/N_{\mathbf{u}}}\xrightarrow[T\rightarrow \infty]{\rm a.s.} 0.
\end{equation}
Employing Pinsker's inequality \cite{kullback1967lower} and the fact that the total variation distance between two discrete measures $q,r$ with vector forms $\mathbf{q},\mathbf{r}$, respectively, is $\|q-r\|_{\rm TV}=(1/2)\|\mathbf{q}-\mathbf{r}\|_1$,  we have that for each $\mathbf{u}\in \{0,1\}^p$
{\small \begin{equation*}
    \frac{1}{2}D_{\rm KL}\left(\mathbf{Q}_{\mathbf{u}}\Big\| \mathbf{P}_{\mathbf{u}}(\hat{\theta}_T)\right) \geq \left\| \mathbf{Q}_{\mathbf{u}} -\mathbf{P}_{\mathbf{u}}(\hat{\theta}_T) \right\|_{\rm TV}^2 \geq \frac{1}{4}\left\| \mathbf{Q}_{\mathbf{u}} -\mathbf{P}_{\mathbf{u}}(\hat{\theta}_T) \right\|_2^2.
\end{equation*}}
Multiplying again with $\frac{N_{\mathbf{u}}}{T}$ and summing over $\mathbf{u}$ gives
\begin{equation}\label{eq: thm1_5}
    -2\sum_{\mathbf{u},\mathbf{v}} \frac{N_{\mathbf{u}\mathbf{v}}}{T} \log \frac{ p_{\mathbf{u}\mathbf{v}}(\hat{\theta}_T)}{N_{\mathbf{u}\mathbf{v}}/N_{\mathbf{u}}} \geq \sum_{\mathbf{u},\mathbf{v}}\frac{N_{\mathbf{u}}}{T} \left(p_{\mathbf{u}\mathbf{v}}(\hat{\theta}_T)-\frac{N_{\mathbf{u}\mathbf{v}}}{N_{\mathbf{u}}}\right)^2\geq0.
\end{equation}
Now employing again the Ergodic Theorem for Markov chains, i.e.,
\begin{equation*}
    \frac{N_{\mathbf{u}}}{T}\xrightarrow[T\rightarrow \infty]{\rm a.s.}\pi_{\mathbf{u}}>0, \forall \mathbf{u}\in \{0,1\}^p,
\end{equation*}
and combining (\ref{eq: thm1_4}) and (\ref{eq: thm1_5}) yields 
\begin{equation*}
    \left|p_{\mathbf{u}\mathbf{v}}(\hat{\theta}_T)-\frac{N_{\mathbf{u}\mathbf{v}}}{N_{\mathbf{u}}}\right| \xrightarrow[T\rightarrow \infty]{\rm a.s.} 0, \ \ \forall (\mathbf{u},\mathbf{v})\in (\{0,1\}^p)^2.
\end{equation*}
We then end up with
\begin{equation}\label{eq: thm1_6}
    \left|p_{\mathbf{u}\mathbf{v}}(\hat{\theta}_T)-p_{\mathbf{u}\mathbf{v}}(\theta_0)\right|  \xrightarrow[T\rightarrow \infty]{\rm a.s.} 0, \ \ \forall (\mathbf{u},\mathbf{v})\in (\{0,1\}^p)^2.
\end{equation}

\noindent\textbf{\emph{Proof of identifiability}}:  Due to the redundancy of $\mathbf{b}$, we reparameterize the BAR model as $\mathbf{\theta}=(\mathbf{A},\mathbf{c})$ with $\mathbf{c}=\mathrm{diag}(\mathbf{b})\mathbf{\rho}_w$. Clearly, there is a one-to-one correspondence between a given set of parameters $(\mathbf{A},\mathbf{c})$ and $\left(\mathbf{A},\mathbf{b}, \rho_w\right)$ via the relations $\mathbf{b}=\mathbf{1}_p-\mathbf{A1}_p$ and $\mathbf{\rho}_w=\left(\mathrm{diag}(\mathbf{1}_p-\mathbf{A1}_p)\right)^{-1}\mathbf{c}$, where $(\cdot)^{-1}$ denotes matrix inversion.
Suppose that two different sets of parameters $\theta = \left(\mathbf{A},\mathbf{c}\right)$ and $\theta' = \left(\mathbf{A}',\mathbf{c}'\right)$ lead to the same transition probability matrix, i.e., $\mathbf{P}(\theta)=\mathbf{P}(\theta')$ or equivalently, $\mathbf{p}(\theta)=\mathbf{p}(\theta')$.  
First, consider the case of $\mathbf{c}\neq\mathbf{c}'$. The following argument is valid for both the cases of $\mathbf{A}=\mathbf{A}'$ and $\mathbf{A}\neq\mathbf{A}'$. Without loss of generality, assume that $c_1 \neq c_1'$. Let $\mathbf{u}=\mathbf{0}$ and $\mathbf{v}$ be some vector in  $\{0,1\}^p$ with $v_1 = 0$. Since $p_{\mathbf{uv}}(\theta)=p_{\mathbf{uv}}(\theta')$ and $c_i,c_i'\neq 0 $ $\forall i$, (\ref{eq: tranProb}) implies that
\begin{equation}\label{eq: lemma1_1}
1-c_1' = \left(1-c_1\right) \prod_{i=2}^p \left(\frac{c_i}{c'_i} \right)^{v_i} \left(\frac{1-c_i}{1-c'_i} \right)^{1-v_i}.
\end{equation}

Consider now the transition probability from $\mathbf{u}=\mathbf{0}$ to $\mathbf{v}'$, where $v'_1=1$ and $v'_j = v_j$ for $j=2,\dots,p$. Since $p_{\mathbf{uv'}}(\theta)=p_{\mathbf{uv'}}(\theta')$,
\begin{equation}\label{eq: lemma1_2}
    c'_1 =  c_1 \prod_{i=2}^p \left(\frac{c_i}{c'_i} \right)^{v_i} \left(\frac{1- c_i}{1- c'_i} \right)^{1-v_i}.
\end{equation}
Combining (\ref{eq: lemma1_1}) and (\ref{eq: lemma1_2}), it is easy to see that $c_1 = c_1'$, which is a contradiction. Thus, $\mathbf{c}\neq \mathbf{c'}\implies \mathbf{P(\theta)}\neq \mathbf{P}(\theta')$ or equivalently, $\mathbf{p}(\theta)\neq \mathbf{p}(\theta')$.  

Now we consider the second case where $\mathbf{c}=\mathbf{c'}$ and $\mathbf{A}\neq \mathbf{A}'$. Without loss of generality, let $a_{11}\neq a'_{11}$. Consider $\mathbf{u}'=\mathbf{e}_{p,1}$ and the same $\mathbf{v},\mathbf{v}'$ as before. By our assumption that $p_{\mathbf{u'v}}(\theta)=p_{\mathbf{u'v}}(\theta')$ and $p_{\mathbf{u'v'}}(\theta)=p_{\mathbf{u'v'}}(\theta')$, the contradiction $a_{11}= a'_{11}$ arises. Thus, $\mathbf{c}=\mathbf{c'}, \mathbf{A}\neq \mathbf{A}'\implies \mathbf{P(\theta)}\neq \mathbf{P}(\theta')$ or equivalently, $\mathbf{p}(\theta)\neq \mathbf{p}(\theta')$.  

Finally, it is easy to see that $\mathbf{c}=\mathbf{c'}$ and $\mathbf{A} = \mathbf{A}'$ imply that $\mathbf{b}=\mathbf{b}'$ and $\mathbf{\rho}_w=\mathbf{\rho}_w'$  due to the aforementioned one-to-one correspondence between $(\mathbf{A},\mathbf{c})$ and $\left(\mathbf{A},\mathbf{b}, \rho_w\right)$.

\noindent\textbf{\emph{Completion of the proof}}: Let $\left(\Omega, \mathcal{F}, P_{\theta_0, P_{\mathbf{X}(0)}}\right)$ be the probability space on which the BAR process is defined, where the subscript $P_{\mathbf{X}(0)}$ indicates that the law of the BAR chain depends on the initial measure. By (\ref{eq: thm1_6}),
 \begin{equation*}
    P_{\theta_0, P_{\mathbf{X}(0)}}\left(\tilde{\Omega}=\left\{\omega\in \Omega: \lim_{T\rightarrow\infty} P(\hat{\theta}_T(\omega))= P(\theta_0)\right\}\right)=1,
\end{equation*}
which holds independently of $P_{\mathbf{X}(0)}$ due to the Ergodic Theorem for Markov chains.
Consider $\omega\in \tilde{\Omega}$ such that  $ \lim_{T\rightarrow\infty} \hat{\theta}_T(\omega)= \theta_0$ does not hold and note that $\hat{\theta}_T(\omega)\in \Theta, \forall T\geq 1$ by (\ref{eq:MLEstimatorDef}). Then, by the Bolzano-Weierstrass Theorem, there exists a subsequence $\{\hat{\theta}_{T_{k}}(\omega)\}_{k=1}^\infty$ converging to some point $\theta^{*}(\omega)\neq \theta_0$. Here, we use the fact that if  every convergent subsequence of the bounded sequence $\{\hat{\theta}_T(\omega)\}_{T=1}^{\infty}$ converges to the same limit $\theta_0$, then $ \lim_{T\rightarrow\infty} \hat{\theta}_T(\omega)= \theta_0$.
The compactness of $\Theta$ implies that $\theta^{*}(\omega)\in \Theta$. Since for every pair $(\mathbf{u},\mathbf{v})$,  $p_{\mathbf{u}\mathbf{v}}(\theta)$ is continuous in $\theta$ due to (\ref{eq: tranProb}) and the definition of $\Theta$, $p_{\mathbf{u}\mathbf{v}}(\theta)$ is sequentially continuous. Therefore,
$
    \lim_{k\rightarrow\infty} p_{\mathbf{u}\mathbf{v}}(\hat{\theta}_{T_k}(\omega))= p_{\mathbf{u}\mathbf{v}}(\theta^*(\omega)), \forall (\mathbf{u},\mathbf{v})$
and $p_{\tilde{\mathbf{u}}\tilde{\mathbf{v}}}(\theta^*(\omega))\neq p_{\tilde{\mathbf{u}}\tilde{\mathbf{v}}}(\theta_0)$ for at least a pair $(\tilde{\mathbf{u}},\tilde{\mathbf{v}})$. Here, the identifiability of the BAR model is invoked. Observing now that $\left\{p_{\tilde{\mathbf{u}}\tilde{\mathbf{v}}}(\hat{\theta}_{T_k}(\omega))\right\}_{k=1}^{\infty}$ is a subsequence of $\left\{p_{\tilde{\mathbf{u}}\tilde{\mathbf{v}}}(\hat{\theta}_{T}(\omega))\right\}_{T=1}^{\infty}$, a contradiction with  the choice of $\omega\in \tilde{\Omega}$ for which $ \lim_{T\rightarrow\infty} p_{\mathbf{u}\mathbf{v}}(\hat{\theta}_T(\omega))= p_{\mathbf{u}\mathbf{v}}(\theta_0), \forall (\mathbf{u}, \mathbf{v})$ is established. 
Therefore, $ \lim_{T\rightarrow\infty} \hat{\theta}_T(\omega)= \theta_0, \forall \omega \in \Tilde{\Omega}$, or equivalently  $\hat{\theta}_T\xrightarrow[T\rightarrow \infty]{\rm a.s}\theta_0$.
\end{proof}

\section{A CLOSED-FORM ESTIMATOR}
\label{sec:closed form}

In this section, we provide a closed-form estimator for the BAR model parameters in (\ref{eq: BAR_def}), i.e., for $(\mathbf{A},\mathbf{c})$, since by knowing $(\mathbf{A},\mathbf{c})$ we can recover $(\mathbf{b},\mathbf{\rho}_w)$. Recall that $\mathbf{c}=\mathrm{diag}(\mathbf{b})\mathbf{\rho}_w$.  Considering the log-likelihood function for $\theta=(\mathbf{A},\mathbf{c})$, we have
\begin{align} \label{eq: log-likelihood_3}
    L(\theta) = & \sum_{k=0}^{T-1} \sum_{\mathbf{u}}\sum_{\mathbf{v}} \mathbb{I}\left(\mathbf{x}(k)=\mathbf{u},\,\mathbf{x}(k+1)=\mathbf{v}\right) \log p_{\mathbf{u}\mathbf{v}}(\theta)\nonumber\\
    &+ \sum_{\mathbf{u}} \mathbb{I}\left(\mathbf{X}(0)=\mathbf{u}\right) \log P_{\mathbf{X}(0)}(\mathbf{u})\nonumber\\
     = &  \sum_{k=0}^{T-1} \sum_{\mathbf{u}}\sum_{\mathbf{v}} \mathbb{I}\left(\mathbf{x}(k)=\mathbf{u},\,\mathbf{x}(k+1)=\mathbf{v}\right) \cdot \nonumber\\
    & \sum_{r=1}^p \left[v_r \log P\left(v_r=1|\mathbf{u}\right) + \left(1-v_r\right)\log P\left(v_r=0|\mathbf{u}\right)\right]\nonumber\\
     &+ \sum_{\mathbf{u}} \mathbb{I}\left(\mathbf{X}(0)=\mathbf{u}\right) \log P_{\mathbf{X}(0)}(\mathbf{u}). 
\end{align}
Observe that $P\left(v_r=1|\mathbf{u}\right)$ and $P\left(v_r=0|\mathbf{u}\right)$ are independent of $\mathbf{v}$. We can therefore define $\vartheta_{\mathbf{u},r,l} = P\left((\cdot)_r = l|\mathbf{u}\right)$, for $l\in\{0,1\}$. Furthermore, we define $N_{\mathbf{u},r,l}=\sum_{k=0}^{T-1} \mathbb{I}\left(\mathbf{x}(k)=\mathbf{u}, x_r(k+1)=l\right)$, which is the number of times the BAR chain transitions from state $\mathbf{u}$ to a state with $r$-th entry being equal to $l$. Moreover, $N_{\mathbf{u},r,0}+ N_{\mathbf{u},r,1} = N_{\mathbf{u}} = \sum_{\mathbf{v}}N_{\mathbf{u}\mathbf{v}}$, $\forall \mathbf{u}\in\{0,1\}^p$ and $r\in[p]$. With these introductions we have the following theorem:

\begin{theorem}\label{thm: LS_red}
Consider an observed sequence $\{\mathbf{x}(k)\}_{k=0}^{T}$. For $i=1,\ldots, p$, define the estimator $\hat{\mathbf{c}}=[\hat{c}_1,\ldots, \hat{c}_p]^T$ by the entry estimators $\hat{c}_i=\sum_{k=0}^{T-1} \mathbb{I}(\mathbf{x}(k)=\mathbf{0}_p,x_i(k+1)=1)/\sum_{k=0}^{T-1} \mathbb{I}(\mathbf{x}(k)=\mathbf{0}_p)$, assuming that the state $\mathbf{0}_p$ is visited at least once in the time span $\{0,\ldots,T-1\}$. Moreover, in the special case where $\rho_{w_i}=\rho_w, \forall i\in \{1,\ldots, p\}$, $\hat{\mathbf{c}}$ can be replaced by $\hat{\mathbf{c}}=\left(\frac{1}{p}\sum_{i=1}^p\hat{c}_i\right)\mathbf{1}_p$.
Furthermore, suppose that in $\{\mathbf{x}(k)\}_{k=0}^{T-1}$ there are $m$ distinct states $\mathbf{u}_1$, $\mathbf{u}_2$,..., $\mathbf{u}_m$ such that $p\leq m\leq 2^p$. Let $\mathbf{U}_m\in \mathbb{R}^{m\times p}$ be a matrix with $k$-th row equal to $\mathbf{u}_k^\top$ for $k\in[m]$ and $\mathbf{y}_{m,r} = \left[N_{\mathbf{u}_1,r,1}/N_{\mathbf{u}_1},\cdots, N_{\mathbf{u}_m,r,1}/N_{\mathbf{u}_m}\right]^\top$. Then, whenever $\mathbf{U}_{m}$ is full-column rank, $\hat{\mathbf{A}}$ is an estimator of $\mathbf{A}$, where
\begin{equation}\label{eq: LS_red}
    \hat{\mathbf{a}}_r = \left(\mathbf{U}_m^\top\mathbf{U}_m\right)^{-1}\mathbf{U}_m^\top\left(\mathbf{y}_{m,r}-\hat{c}_r\cdot \mathbf{1}_m\right), \ \ \forall r\in [p].
\end{equation}
Finally, to obtain a valid estimate of the parameter set for any $T\geq 1$, we let 
\[
\hat{\mathbf{\theta}}=\left[\left(\hat{\mathbf{A}}, \hat{\mathbf{b}}=\mathbf{1}_p-\hat{\mathbf{A}}\mathbf{1}_p,\hat{\mathbf{\rho}}_w= \left(\mathrm{diag}(\mathbf{1}_p-\hat{\mathbf{A}}\mathbf{1}_p)\right)^{-1}\hat{\mathbf{c}}\right)\right]^{+},
\]
where $[\cdot]^{+}$ corresponds to a projection onto the parameter space $\Theta$. 
\end{theorem}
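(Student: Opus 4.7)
My approach is to obtain the closed-form expressions by rewriting the log-likelihood (\ref{eq: log-likelihood_3}) in terms of the sufficient counts $N_{\mathbf{u},r,l}$ and exploiting the affine parameterization of the coordinate-wise Bernoulli success probabilities $\vartheta_{\mathbf{u},r,1}=\mathbf{a}_r^\top\mathbf{u}+c_r$. Grouping the contributions of each observed transition by $(\mathbf{u},r)$ and summing over $k$, the rescaled log-likelihood becomes, up to the parameter-independent initial-state term,
\begin{equation*}
L(\theta)=\sum_{r=1}^{p}\sum_{\mathbf{u}\in\{0,1\}^p}\bigl[N_{\mathbf{u},r,1}\log(\mathbf{a}_r^\top\mathbf{u}+c_r)+N_{\mathbf{u},r,0}\log(1-\mathbf{a}_r^\top\mathbf{u}-c_r)\bigr],
\end{equation*}
so the joint estimation of $(\mathbf{A},\mathbf{c})$ decouples into $p$ independent subproblems, one per row $r$.

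I would first derive $\hat{c}_r$ by restricting attention to the single summand $\mathbf{u}=\mathbf{0}_p$: since $\mathbf{a}_r^\top\mathbf{0}_p=0$, that summand is a Bernoulli log-likelihood purely in $c_r$, namely $N_{\mathbf{0}_p,r,1}\log c_r+N_{\mathbf{0}_p,r,0}\log(1-c_r)$, whose unique interior maximizer is $N_{\mathbf{0}_p,r,1}/N_{\mathbf{0}_p}$, exactly the stated $\hat{c}_r$. This is well-defined whenever the chain has visited $\mathbf{0}_p$ at least once. In the symmetric-noise regime $\rho_{w_i}\equiv\rho_w$, where all $c_i$ coincide under the implicit common value of the $b_i$, each $\hat{c}_i$ is an estimator of the same scalar, so pooling them via $\hat{\mathbf{c}}=(p^{-1}\sum_{i}\hat{c}_i)\mathbf{1}_p$ reduces variance without introducing bias.

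With $\hat{c}_r$ in hand, I would obtain $\hat{\mathbf{a}}_r$ through a single least-squares linearization. For each observed state $\mathbf{u}_k$, the unconstrained empirical estimate of $\vartheta_{\mathbf{u}_k,r,1}$ is the $k$-th entry of $\mathbf{y}_{m,r}$; substituting the model equation $\vartheta_{\mathbf{u}_k,r,1}=\mathbf{a}_r^\top\mathbf{u}_k+c_r$ and stacking across $k\in[m]$ yields the overdetermined linear system $\mathbf{U}_m\mathbf{a}_r+c_r\mathbf{1}_m\approx\mathbf{y}_{m,r}$. Plugging in $\hat{c}_r$ and minimizing $\|\mathbf{y}_{m,r}-\hat{c}_r\mathbf{1}_m-\mathbf{U}_m\mathbf{a}_r\|_2^2$ gives the normal equation whose unique solution is precisely (\ref{eq: LS_red}); the assumed full column rank of $\mathbf{U}_m$ is exactly the condition needed to invert $\mathbf{U}_m^\top\mathbf{U}_m$.

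The main subtlety is the handling of small-$T$ pathologies: the state $\mathbf{0}_p$ may not yet have been visited (making $\hat{c}_r$ undefined), $\mathbf{U}_m$ may be rank-deficient (certainly when $m<p$), and even in the well-defined regime the raw triple $(\hat{\mathbf{A}},\hat{\mathbf{b}},\hat{\mathbf{\rho}}_w)$ can violate the defining constraints of $\Theta$ (nonnegativity of the entries of $\hat{\mathbf{A}}$, the row-sum identity $\hat{\mathbf{A}}\mathbf{1}_p+\hat{\mathbf{b}}=\mathbf{1}_p$ enforcing (\ref{eq: BAR_const}), the lower bound on $\hat{\mathbf{b}}$, or the bounds on $\hat{\mathbf{\rho}}_w$). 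The closing projection $[\cdot]^{+}$ onto the compact set $\Theta$ resolves all of these edge cases and guarantees $\hat{\theta}\in\Theta$ for every $T\geq 1$. Under the ergodicity of the BAR chain, these pathologies occur only on a $P_{\theta_0,P_{\mathbf{X}(0)}}$-a.s.\ finite initial portion of the trajectory, which is the property that will power the consistency argument in the following section.
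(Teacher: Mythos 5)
Your derivation is correct and follows essentially the same route as the paper: rewrite the likelihood in terms of the counts $N_{\mathbf{u},r,l}$, take the unconstrained empirical estimates $N_{\mathbf{u},r,1}/N_{\mathbf{u}}$ of the marginal transition probabilities (your $\mathbf{u}=\mathbf{0}_p$ Bernoulli step gives exactly the stated $\hat{c}_r$), impose the affine parameterization $\vartheta_{\mathbf{u},r,1}=\mathbf{a}_r^\top\mathbf{u}+c_r$ over the observed states, and solve the resulting system via the normal equations under the full-column-rank assumption, followed by projection onto $\Theta$. Your explicit least-squares framing of the linear step is only a presentational variant of the paper's ``invariance''-style argument, so no substantive difference or gap exists.
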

\begin{proof}
First, let us rewrite the log-likelihood function in (\ref{eq: log-likelihood_3}) as 
\begin{align*}
       \mathcal{L}=L(\theta)=& \sum_{\mathbf{u}}\sum_{r=1}^p \left(N_{\mathbf{u},r,0} \log \vartheta_{\mathbf{u},r,0} +  N_{\mathbf{u},r,1} \log\vartheta_{\mathbf{u},r,1}\right)\\& + \sum_{\mathbf{u}} \mathbb{I}\left(\mathbf{X}(0)=\mathbf{u}\right) \log P_{\mathbf{X}(0)}(\mathbf{u}).
\end{align*}
Instead of maximizing this function with respect to $\theta=(\mathbf{A},\mathbf{c})$, we maximize it with respect to the choice of the marginal conditional probabilities $\{\vartheta_{\mathbf{u},r,0},\vartheta_{\mathbf{u},r,1}\}_{\mathbf{u},r}$.
Consider the constrained ML estimation problem
\begin{align}\label{eq: const_MLE}
    & \quad\max_{\{\vartheta_{\mathbf{u},r,0}\geq 0, \vartheta_{\mathbf{u},r,1}\geq 0\}_{\mathbf{u},r}}  \quad \mathcal{L} \nonumber\\
   & {\rm s.t.} \quad \vartheta_{\mathbf{u},r,0} + \vartheta_{\mathbf{u},r,1} =1, \quad \forall \mathbf{u}\in\{0,1\}^p, \, \forall r\in[p].
\end{align}
Forming the Lagrangian and setting the gradient (with respect to $\{\vartheta_{\mathbf{u},r,0},\vartheta_{\mathbf{u},r,1}\}_{\mathbf{u},r}$) to zero, we obtain
\begin{equation*}
    \hat{\vartheta}_{\mathbf{u},r,i} = \frac{N_{\mathbf{u},r,i}}{N_{\mathbf{u}}},\quad \forall \mathbf{u}\in\{0,1\}^p,\, \forall r\in[p],\, \forall i\in\{0,1\}.
\end{equation*}
Recall that $\vartheta_{\mathbf{u},r,1}$ is defined as the probability of transitioning from state $\mathbf{u}$ to some state with  $r$-th component equal to $1$. We can therefore require that 
\begin{align}\label{eq:invariance}
    \begin{bmatrix}\frac{N_{\mathbf{u}_1,r,1}}{N_{\mathbf{u}_1}} \\  \vdots \\\frac{N_{\mathbf{u}_m,r,1}}{N_{\mathbf{u}_m}} \end{bmatrix} = \underbrace{\begin{bmatrix}\mathbf{u}_1^\top \\  \vdots \\\mathbf{u}_m^\top\end{bmatrix}}_{\mathbf{U}_m}\cdot \hat{\mathbf{a}}_r +\hat{c}_r\mathbf{1}_m
\end{align}
\begin{equation}\label{eq:LS_red_2}
\text{or}\ \ \mathbf{y}_{m,r}-\hat{c}_r\mathbf{1}_m=\mathbf{U}_m\hat{\mathbf{a}}_r,
\end{equation}
where the estimates $\hat{c}_r$ are provided in the statement of the theorem.
Under the assumption that $\mathbf{U}_m$ is full-column rank, $\mathbf{U}_m^\top \mathbf{U}_m$ is nonsingular and (\ref{eq: LS_red}) follows. The proof is then concluded by projecting the obtained estimate onto $\Theta$.
\end{proof}

\textbf{Remark}: (\ref{eq:invariance}) is reminiscent of the \emph{invariance property} for ML estimation.

\begin{theorem}\label{thm: LS_constistency}
The closed-form estimator in Theorem \ref{thm: LS_red} is strongly consistent.
\end{theorem}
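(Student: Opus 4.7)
The plan is to reduce strong consistency to repeated applications of the Ergodic Theorem for finite-state irreducible Markov chains, together with a continuity/projection argument. First I would establish $\hat{c}_i \xrightarrow[T\to\infty]{\mathrm{a.s.}} c_i$. The key observation is that when $\mathbf{u}=\mathbf{0}_p$, the Bernoulli parameter governing $X_i(k+1)$ reduces to $\mathbf{a}_i^\top \mathbf{0}_p + \rho_{w_i}b_i = c_i$. Hence, dividing both the numerator and the denominator of $\hat{c}_i$ by $T$ and applying the Ergodic Theorem, the denominator converges a.s.\ to $\pi_{\mathbf{0}_p}>0$ (positive by irreducibility of the BAR chain, which was already noted in Section \ref{sec:BAR Model}), while the numerator converges a.s.\ to $\pi_{\mathbf{0}_p}c_i$, giving $\hat{c}_i \to c_i$ a.s.

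Next, for $\hat{\mathbf{A}}$, I would exploit that, since the chain is irreducible and aperiodic on the finite state space $\{0,1\}^p$, with probability one every state is visited infinitely often. Consequently, there exists an a.s.\ finite random time $T_0$ such that for all $T\ge T_0$ every state has been observed at least once, so $m=2^p$ and $\mathbf{U}_m$ stabilizes to the fixed matrix $\mathbf{U}_{2^p}$ containing all $2^p$ binary vectors as rows. Because this matrix contains the standard basis vectors $\mathbf{e}_{p,i}$ as rows, it has full column rank $p$, and the linear map $(\mathbf{U}_{2^p}^\top \mathbf{U}_{2^p})^{-1}\mathbf{U}_{2^p}^\top$ is well-defined and constant for all $T\ge T_0$. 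A second application of the Ergodic Theorem yields $N_{\mathbf{u},r,1}/N_{\mathbf{u}} \to \mathbf{a}_r^\top \mathbf{u} + c_r$ a.s.\ for each $\mathbf{u}$, so $\mathbf{y}_{m,r}\to \mathbf{U}_{2^p}\mathbf{a}_r + c_r\mathbf{1}_{2^p}$. Combining this with $\hat{c}_r\to c_r$ gives $\mathbf{y}_{m,r}-\hat{c}_r\mathbf{1}_m \to \mathbf{U}_{2^p}\mathbf{a}_r$, and therefore $\hat{\mathbf{a}}_r\to (\mathbf{U}_{2^p}^\top\mathbf{U}_{2^p})^{-1}\mathbf{U}_{2^p}^\top \mathbf{U}_{2^p}\mathbf{a}_r = \mathbf{a}_r$ a.s.

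To conclude, I would note that $\hat{\mathbf{b}} = \mathbf{1}_p - \hat{\mathbf{A}}\mathbf{1}_p$ and $\hat{\rho}_w = (\mathrm{diag}(\hat{\mathbf{b}}))^{-1}\hat{\mathbf{c}}$ are continuous in $(\hat{\mathbf{A}},\hat{\mathbf{c}})$ at the true parameter (the standing assumption $b_i\ge b_{\min}>0$ keeps the inverse well-defined at the limit), so the unprojected estimator converges a.s.\ to $\theta_0$. Since $\Theta$ is compact and convex, the Euclidean projection $[\cdot]^+$ is $1$-Lipschitz, hence continuous; as $\theta_0\in\Theta$ satisfies $[\theta_0]^+=\theta_0$, the continuous mapping theorem gives $\hat{\theta}_T \to \theta_0$ a.s. The main obstacle I expect is the careful handling of the $T$-dependence of $\mathbf{U}_m$, whose size and entries both vary with $T$ until all $2^p$ states have been sampled; the resolution is precisely the almost-sure stabilization at the random time $T_0$, after which the estimator reduces to a fixed linear functional of ergodic averages and the standard Ergodic-Theorem machinery applies.
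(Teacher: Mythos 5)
Your proposal is correct and follows essentially the same route as the paper: almost-sure convergence of $\hat{c}_i$ and of the empirical conditional frequencies $N_{\mathbf{u},r,1}/N_{\mathbf{u}}$ via the Ergodic Theorem for the finite, irreducible BAR chain, combined with the least-squares identity $\mathbf{y}_{m,r}-\hat{c}_r\mathbf{1}_m=\mathbf{U}_m\hat{\mathbf{a}}_r$ and the full column rank of $\mathbf{U}_{2^p}$. The only differences are cosmetic and, if anything, tidier: you handle the $T$-dependence of $\mathbf{U}_m$ through the a.s.\ finite stabilization time after which all $2^p$ states have been visited (the paper instead phrases this as a zero-padded embedding $\mathbf{U}_m\rightarrow\mathbf{U}_{2^p}$), you obtain full column rank simply from the rows $\mathbf{e}_{p,1},\dots,\mathbf{e}_{p,p}$ of $\mathbf{U}_{2^p}$ rather than via the Sherman--Morrison computation of $\mathbf{U}_{2^p}^\top\mathbf{U}_{2^p}=2^{p-2}\mathbf{1}_p\mathbf{1}_p^\top+2^{p-2}\mathbf{I}_p$, and you make explicit the continuity of the map to $(\hat{\mathbf{b}},\hat{\rho}_w)$ and of the projection onto the (convex, compact) $\Theta$, steps the paper leaves implicit.
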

\begin{proof}
It is sufficient to show that $(\hat{\mathbf{A}},\hat{\mathbf{c}})$ given by Theorem \ref{thm: LS_red} is strongly consistent.
Since the BAR chain is finite-state, the stationary probabilities satisfy $\pi_{\mathbf{u}}>0, \forall \mathbf{u}\in \{0,1\}^p$. Moreover, for any initial measure, the Ergodic Theorem for Markov chains implies that\footnote{The following convergences can be easily justified by straightforward embeddings of $\mathbf{U}_m$ into $\mathbb{R}^{2^p\times p}$ in the first case and of $\mathbf{y}_{m,r}$ into $\mathbb{R}^{2^p}$ in the second case via zero padding.}
\begin{itemize}
\item $\frac{N_\mathbf{u}}{T}\xrightarrow[T\rightarrow \infty]{\rm a.s.}\pi_{\mathbf{u}}$. This further implies that $\mathbf{U}_m\xrightarrow[T\rightarrow \infty]{\rm a.s.} \mathbf{U}_{2^p}$ in the sense that $[\mathbf{U}_m^\top\ \ \mathbf{0}_{2^p-m\times p}^\top]^\top\xrightarrow[T\rightarrow \infty]{\rm a.s.} \mathbf{U}_{2^p}$. 
\item $\frac{N_\mathbf{u},r,1}{N_{\mathbf{u}}}\xrightarrow[T\rightarrow \infty]{\rm a.s.} P\left((\cdot)_r = 1|\mathbf{u}\right), \forall \mathbf{u}\in \{0,1\}^p, \forall r\in [p]$ or equivalently, $\frac{N_\mathbf{u},r,1}{N_{\mathbf{u}}}\xrightarrow[T\rightarrow \infty]{\rm a.s.} \mathbf{u}^\top\mathbf{a}_r+c_r, \forall \mathbf{u}\in \{0,1\}^p, \forall r\in [p]$. This implies that $\mathbf{y}_{m,r}\xrightarrow[T\rightarrow \infty]{\rm a.s.} [P\left((\cdot)_r = 1|\mathbf{u}\right)]_{\mathbf{u}\in \{0,1\}^p}, \forall r\in [p]$, which is a $2^p\times 1$ column vector, in the sense that  $[\mathbf{y}_{m,r}^\top\ \ \mathbf{0}_{2^p-m}^\top]^\top\xrightarrow[T\rightarrow \infty]{\rm a.s.} [P\left((\cdot)_r = 1|\mathbf{u}\right)]_{\mathbf{u}\in \{0,1\}^p}, \forall r\in [p]$. As a consequence, $\hat{\mathbf{c}}\xrightarrow[T\rightarrow \infty]{\rm a.s.}\mathbf{c}$.
\end{itemize}

Combining these observations with (\ref{eq:LS_red_2}) we obtain
\begin{align}
&\lim_{T\rightarrow \infty} [\mathbf{U}_m^\top\ \ \mathbf{0}_{2^p-m\times p}^\top]^\top\hat{\mathbf{a}}_r=\lim_{T\rightarrow \infty} \mathbf{U}_{2^p}\hat{\mathbf{a}}_r=\mathbf{U}_{2^p}\mathbf{a}_r\ \ \text{a.s.}
\end{align}
and the strong consistency of the closed-form estimator in  Theorem \ref{thm: LS_red} follows if $\mathbf{U}_{2^p}$ is full-column rank or equivalently if $\mathbf{U}_{2^p}^\top\mathbf{U}_{2^p}$ is nonsingular.  It is easy to see that 
$\mathbf{U}_{2^p}^\top\mathbf{U}_{2^p}$ has diagonal entries equal to $2^{p-1}$ and off-diagonal entries equal to $2^{p-2}$. Thus, we can write $\mathbf{U}_{2^p}^\top\mathbf{U}_{2^p}=2^{p-2}\mathbf{1}_p\mathbf{1}_p^\top+2^{p-2}\mathbf{I}_p$. This matrix is invertible for every $p<\infty$, since $1+2^{p-2}\mathbf{1}_p^\top\left(2^{p-2}\mathbf{I}_p\right)^{-1}\mathbf{1}_p=1+p>0$ as the condition in the Sherman–Morrison formula \cite{press2007numerical} dictates.
\end{proof}

\section{THE GENERIC BAR MODEL}
\label{sec: General BAR}

Motivated by modeling positive and negative influences from parental nodes, an extension of the BAR model in (\ref{eq: BAR_def}) has been introduced in \cite{katselis2018mixing}. We first reformulate this generic BAR model.

Denote by $\mathcal{S}_i=\mathcal{S}_i^+ \cup \mathcal{S}_i^- $ the parental set of node $i$, where $\mathcal{S}_i^+ \cap \mathcal{S}_i^- = \emptyset$. The nodes in $\mathcal{S}_i^+$ and $\mathcal{S}_i^-$ are said to have positive and negative influence on $i$, respectively. The generic BAR model, parameterized by  $\tilde{\theta} = \left(\mathbf{A},\Tilde{\mathbf{A}},\mathbf{b},\rho_w\right)$, is defined as
\begin{equation}\label{eq: BAR_def2}
    X_i(k+1)\sim \textnormal{Ber}\left(\mathbf{a}_i^\top\mathbf{X}(k)+\Tilde{\mathbf{a}}_i^\top\left(\mathbf{1}-\mathbf{X}(k)\right)+b_iW_i(k+1)\right) ,
\end{equation}
for all $i\in[p]$, where $\mathbf{a}_i^\top$ and $\Tilde{\mathbf{a}}_i^\top$ are the $i$-th rows of $\mathbf{A}\in\mathbb{R}^{p\times p}$ and $\Tilde{\mathbf{A}}\in\mathbb{R}^{p\times p}$, respectively. Furthermore, we assume that $\mathcal{S}_i^+ = \text{supp}(\mathbf{a}_i)$ and $\mathcal{S}_i^- = \text{supp}(\Tilde{\mathbf{a}}_i)$. Here, $\text{supp}(\cdot)$ denotes the support  of a vector. As in the previous case, the constraints
\begin{equation}\label{eq: BAR_const2}
    \sum_{j=1}^p \left(a_{ij} + \Tilde{a}_{ij} \right) + b_i = 1, \quad \forall i\in[p]
\end{equation}
are also required  in this case. Similarly, we assume that $a_{ij},\Tilde{a}_{ij}\geq0$, $\forall i,j\in[p]$, $b\geq b_{min}$ and $\rho_{w_i}\in[\rho_{min},\,\rho_{max}], \forall i\in [p]$. Therefore, the parameter space is defined as 
\begin{equation*}
\begin{split}
    \tilde{\Theta} = \Bigg\{ & \left(\mathbf{A},\Tilde{\mathbf{A}},\mathbf{b}, \rho_w \right)\Big| \sum_{j=1}^p \left(a_{ij} + \Tilde{a}_{ij} \right)+ b_i = 1, \,\, b_i\geq  b_{min},\\ & \rho_{w_i}\in\left[\rho_{min},\,\rho_{max}\right],  \forall i\in [p], \, \, \text{and} \,\, \,a_{ij},\tilde{a}_{ij}\geq 0,\\ &   a_{ij}\Tilde{a}_{ij}=0,\,\forall i,j \in [p]  \Bigg\} .
\end{split}
\end{equation*}
\textbf{Remark}: The parameter space $\tilde{\Theta}$ is compact. To see this, first note that the associated constraints imposed on $\tilde{\Theta}$ are defined row-wise. This implies that $\tilde{\Theta}$ can be viewed as the Cartesian product of row spaces, i.e., $\tilde{\Theta}=\prod_{i=1}^p  \tilde{\Theta}_i$, where 
\begin{equation*}
\begin{split}
     \tilde{\Theta}_i =& \Bigg\{
     \left(\mathbf{a}_i^\top, \tilde{\mathbf{a}}_i^\top, b_i, \rho_{w_i}\right)\Big| \sum_{j=1}^p (a_{ij} + \Tilde{a}_{ij}) + b_i = 1,b_i\geq  b_{min},\\ & \rho_{w_i}\in\left[\rho_{min},\,\rho_{max}\right], \text{and} \,\, a_{ij},\tilde{a}_{ij}\geq 0,\, a_{ij}\Tilde{a}_{ij}=0,\,\forall j \in [p]  \Bigg\} .
     \end{split}
\end{equation*}
Then $\tilde{\Theta}$ is compact if and only if $\tilde{\Theta}_i$ is compact for every $i\in[p]$. It is straightforward to see that $\tilde{\Theta}_i$ is compact without the constraints $a_{ij}\Tilde{a}_{ij}=0$, $\forall j\in[p]$. Also observe that adding a constraint $a_{ij}\Tilde{a}_{ij}=0$ leads to the coordinate projection of a compact set in $\mathbb{R}^{2p+2}$ onto two $(2p+1)$-dimensional subspaces of $\mathbb{R}^{2p+2}$ with the corresponding images of the projected compact set being also compact sets. Denote the union of these images as $\tilde{\Theta}_{i,j}$. Then $\tilde{\Theta}_i = \cap_{j=1}^p\tilde{\Theta}_{i,j}$, i.e., $\tilde{\Theta}_i$ is the intersection of $p$ compact sets and is therefore compact. 

The ML estimator is a maximizer of the rescaled log-likelihood function, that is,
\begin{equation*}
    \tilde{\theta}_T \in \arg\max_{\tilde{\theta}\in \tilde{\Theta}}\,\,\Tilde{L}_T(\tilde{\theta}),
\end{equation*}
where
{\small 
\begin{equation*}
\begin{split}
     &\Tilde{L}_T(\tilde{\theta}) = \frac{1}{T} \sum_{k=0}^{T-1} \sum_{i=1}^p \Bigg[ x_i(k+1)\log \left(\mathbf{a}_i^\top \mathbf{x}(k)+ \Tilde{\mathbf{a}}_i^\top (\mathbf{1}-\mathbf{x}(k))+\rho_{w_i} b_i\right) \\
    &+ \left( 1-x_i(k+1)\right)\log \left(1-\mathbf{a}_i^\top \mathbf{x}(k)-\Tilde{\mathbf{a}}_i^\top (\mathbf{1}-\mathbf{x}(k))- \rho_{w_i} b_i\right) \Bigg] \\
    &+\frac{1}{T}\log P_{\mathbf{X}(0)}(\mathbf{x}(0)).
\end{split}
\end{equation*}
}
The ML estimator for the generic BAR model can be shown to be strongly consistent via a direct extension of the analysis in Section \ref{sec:ML Estimation}. More precisely, it is sufficient to establish identifiability.

\begin{theorem}\label{thm: identifiability}
For the generic BAR model in (\ref{eq: BAR_def2}), $\tilde{\theta} \neq \tilde{\theta}'\implies \mathbf{p}(\tilde{\theta}) \neq \mathbf{p}(\tilde{\theta}'), \forall (\tilde{\theta}, \tilde{\theta}' )\in \tilde{\Theta}\times \tilde{\Theta}$ with $\tilde{\theta} \neq \tilde{\theta}'$.
\end{theorem}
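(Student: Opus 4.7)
The plan is to follow the template of the identifiability argument embedded in the proof of Theorem \ref{thm: consistency}, but to replace the two-state probing there with a more systematic use of the conditional independence structure together with the support constraint $a_{ij}\tilde{a}_{ij}=0$. Since the components of $\mathbf{X}(k+1)$ are conditionally independent given $\mathbf{X}(k)$, the equality $\mathbf{p}(\tilde{\theta})=\mathbf{p}(\tilde{\theta}')$ is equivalent to the agreement of all marginal conditional success probabilities, i.e.,
\[
p_i(\mathbf{u};\tilde{\theta})=p_i(\mathbf{u};\tilde{\theta}'),\quad \forall\,i\in[p],\ \forall\,\mathbf{u}\in\{0,1\}^p,
\]
where $p_i(\mathbf{u};\tilde{\theta})=\mathbf{a}_i^\top\mathbf{u}+\tilde{\mathbf{a}}_i^\top(\mathbf{1}_p-\mathbf{u})+\rho_{w_i}b_i$ denotes the probability that $X_i(k+1)=1$ given $\mathbf{X}(k)=\mathbf{u}$. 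I will therefore assume this marginal agreement and read off the parameters row by row.

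For each fixed $i$, evaluating at $\mathbf{u}=\mathbf{0}_p$ yields $\tilde{\mathbf{a}}_i^\top\mathbf{1}_p+\rho_{w_i}b_i=(\tilde{\mathbf{a}}_i')^\top\mathbf{1}_p+\rho_{w_i}'b_i'$, and evaluating at $\mathbf{u}=\mathbf{e}_{p,j}$ for each $j\in[p]$ and subtracting the previous identity gives
\[
a_{ij}-\tilde{a}_{ij}=a_{ij}'-\tilde{a}_{ij}',\quad\forall\,j\in[p].
\]
At this point the support constraint $a_{ij}\tilde{a}_{ij}=0$ combined with the nonnegativity built into $\tilde{\Theta}$ forces $a_{ij}=\max(a_{ij}-\tilde{a}_{ij},0)$ and $\tilde{a}_{ij}=\max(\tilde{a}_{ij}-a_{ij},0)$, and likewise for the primed entries, so the individual entries of $\mathbf{A}$ and $\tilde{\mathbf{A}}$ are recovered. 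The row-sum constraint (\ref{eq: BAR_const2}) then forces $b_i=b_i'$, and because $b_i\geq b_{min}>0$, substituting back into the intercept identity yields $\rho_{w_i}=\rho_{w_i}'$. Iterating over $i\in[p]$ gives $\tilde{\theta}=\tilde{\theta}'$, which is the contrapositive of the claim.

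The main obstacle, and the reason this proof does not reduce verbatim to the one in Theorem \ref{thm: consistency}, is precisely the potential ambiguity between $a_{ij}$ and $\tilde{a}_{ij}$: only their difference is visible in the affine response $p_i(\mathbf{u};\tilde{\theta})$, so without the complementarity $a_{ij}\tilde{a}_{ij}=0$ the mapping $\tilde{\theta}\mapsto\mathbf{p}(\tilde{\theta})$ would collapse each edge parameter pair onto its signed difference. The argument must therefore invoke the support constraint exactly once per edge to split $a_{ij}-\tilde{a}_{ij}$ into its positive and negative parts. Once identifiability is established in this form, the strong consistency of the ML estimator for the generic BAR model will follow by repeating the compactness-plus-continuity argument in the final part of the proof of Theorem \ref{thm: consistency}, using the compactness of $\tilde{\Theta}$ justified in the Remark preceding the theorem.
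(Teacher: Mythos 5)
Your proof is correct, but it follows a genuinely different route from the paper. The paper argues by contradiction through a three-way case analysis (supports $\mathcal{S}_i^{\pm}$ differ; supports agree and $\mathbf{c}\neq\mathbf{c}'$; supports agree, $\mathbf{c}=\mathbf{c}'$, and the matrices differ), probing the chain with the specific states $\mathbf{u}=\sum_{i\in\mathcal{S}_1^-}\mathbf{e}_{p,i}$ and $\mathbf{u}'=\mathbf{e}_{p,1}+\sum_{i\in\mathcal{S}_1^-}\mathbf{e}_{p,i}$ and pairs $\mathbf{v},\mathbf{v}'$ differing in one coordinate, exactly mirroring the structure of the identifiability step in Theorem \ref{thm: consistency}. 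You instead reduce matrix equality to equality of the per-node conditional success probabilities $p_i(\mathbf{u};\tilde{\theta})$ (the direction you need is immediate, since $P(X_i(k+1)=1\,|\,\mathbf{X}(k)=\mathbf{u})=\sum_{\mathbf{v}:v_i=1}p_{\mathbf{u}\mathbf{v}}$; the asserted converse via conditional independence is true but unused), then evaluate these affine functions at $\mathbf{0}_p$ and the canonical vectors $\mathbf{e}_{p,j}$ to read off the signed differences $a_{ij}-\tilde{a}_{ij}$ and the intercepts $\tilde{\mathbf{a}}_i^\top\mathbf{1}_p+\rho_{w_i}b_i$, and finally use the complementarity $a_{ij}\tilde{a}_{ij}=0$ with nonnegativity to split each difference into $a_{ij}$ and $\tilde{a}_{ij}$, after which (\ref{eq: BAR_const2}) gives $b_i=b_i'$ and $b_i\geq b_{min}>0$ gives $\rho_{w_i}=\rho_{w_i}'$. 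Your version buys a constructive, case-free reconstruction of the parameters that makes explicit exactly where the support constraint is indispensable (without it only $\mathbf{a}_i-\tilde{\mathbf{a}}_i$ is visible), and it avoids any manipulation requiring positivity of individual transition probabilities; the paper's version buys uniformity with the earlier proof and keeps the argument phrased directly in terms of the transition probabilities $p_{\mathbf{u}\mathbf{v}}$ rather than their marginals. Both establish the theorem, and either feeds into the same compactness-plus-continuity argument for strong consistency.
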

\begin{proof}
For two different sets of parameters $(\mathbf{A},\tilde{\mathbf{A}},\mathbf{b}, \rho_w)$, $(\mathbf{A}',\tilde{\mathbf{A}}',\mathbf{b}',\rho_w')$ and by letting $c_i = b_i \rho_{w_i}, i=1,\ldots, p$, we can reparameterize the generic BAR model to obtain $\tilde{\theta}=(\mathbf{A},\tilde{\mathbf{A}},\mathbf{c})$, $\tilde{\theta}'=(\mathbf{A}',\tilde{\mathbf{A}}',\mathbf{c}')$ by recalling the one-to-one correspondence between the initial sets of parameters and the later ones as in the case of the BAR model with only positive correlations. The corresponding relations in this case are $\mathbf{b}=\mathbf{1}_p-(\mathbf{A}+\tilde{\mathbf{A}})\mathbf{1}_p$ and $\mathbf{\rho}_w=\left(\mathrm{diag}(\mathbf{b})\right)^{-1}\mathbf{c}$. We will examine different cases for which $\tilde{\theta}\neq \tilde{\theta}'$ and by assuming that $\mathbf{p}(\tilde{\theta})=\mathbf{p}(\tilde{\theta}')$ we will arrive to contradictions. 
\begin{itemize}
    \item \emph{First Case}: Suppose that there exists some $i\in[p]$ such that $\mathcal{S}_i^{+} \neq \mathcal{S'}_i^{+}$, where $\mathcal{S}_i^{+}$ and $\mathcal{S’}_i^{+}$ correspond to the parental neighborhoods with positive influence on the $i$-th node, as these neighborhoods are encoded in $\tilde{\theta}$ and $\tilde{\theta}'$, respectively. Without loss of generality, we assume that $\mathcal{S}_1^{+} \neq \mathcal{S'}_1^{+}$. Translating this structural difference into the model parameters, suppose again without loss of generality that $1\in \mathcal{S}_1^+$, i.e., $a_{11}\neq 0$ and therefore,  $\Tilde{a}_{11}=0$ and $1\notin  \mathcal{S'}_1^+ $ , i.e., $a'_{11}=0$ and $\tilde{a'}_{11}=0$ or $\tilde{a'}_{11}>0$. 
    
    We first consider the transition probabilities from $\mathbf{u}=\sum_{i\in\mathcal{S}_1^-} \mathbf{e}_{p,i}$ to states $\mathbf{v}$ and $\mathbf{v}'$, where   $\mathbf{v}$ and $\mathbf{v}'$ only differ in the first element with $v_1=1$ and $v'_1=0$. Then by letting
    $p_{\mathbf{u}\mathbf{v}}(\tilde{\theta}) = p_{\mathbf{u}\mathbf{v}}(\tilde{\theta}')$ and $p_{\mathbf{u}\mathbf{v'}}(\tilde{\theta}) = p_{\mathbf{u}\mathbf{v'}}(\tilde{\theta}')$, we obtain
    \begin{align}\label{eq: thm2_1}
    c_1 &= \sum_{j\in\mathcal{S}_1^-}a'_{1j} + \sum_{k\notin \mathcal{S}_1^-}\Tilde{a'}_{1k} + c'_1 \nonumber\\
        &=\sum_{j\in\mathcal{S}_1^-}a'_{1j} + \tilde{a}'_{11} +\sum_{k\notin \mathcal{S}_1^-\cup\{1\}}\Tilde{a'}_{1k} + c'_1.
    \end{align}
    Further by considering the probabilities of transitioning from $\mathbf{u'}= \mathbf{e}_{p,1} + \sum_{i\in\mathcal{S}_1^-} \mathbf{e}_{p,i}$ to $\mathbf{v}$ and $\mathbf{v'}$, we obtain
    \begin{equation}\label{eq: thm2_2}
    \begin{split}
      &a_{11}+c_1=\sum_{j\in\mathcal{S}_1^-}a'_{1j} + \sum_{k\notin \mathcal{S}_1^-\cup \{1\}}\Tilde{a'}_{1k} + c'_1,
    \end{split}
    \end{equation}
    where the assumption $a'_{11}=0$ has been used.
    By (\ref{eq: thm2_1}) and (\ref{eq: thm2_2}) we have that $a_{11}+\tilde{a}'_{11}=0$, which implies that $a_{11}=0$ and contradicts our assumption. The case of  $\mathcal{S}_i^{-} \neq \mathcal{S'}_i^{-}$ is similar. 
    \item \emph{Second Case}: Suppose that $(\mathcal{S}_i^{+}, \mathcal{S}_i^{-}) =(\mathcal{S'}_i^{+},\mathcal{S’}_i^{-})$ for all $i\in[p]$ with either $\left(\mathbf{A},\mathbf{\Tilde{A}}\right)=\left(\mathbf{A}',\mathbf{\Tilde{A}}'\right)$ or $\left(\mathbf{A},\mathbf{\Tilde{A}}\right) \neq \left(\mathbf{A}',\mathbf{\Tilde{A}}'\right)$ and $\mathbf{c}\neq \mathbf{c'}$. Without loss of generality, let $c_1\neq c_1'$. Similarly to the proof of Theorem \ref{thm: consistency}, by selecting $\mathbf{u}=\sum_{i\in\mathcal{S}_1^-} \mathbf{e}_{p,i}$ and $\mathbf{v},\mathbf{v}'$ as before, we arrive at the contradiction  $c_1=c'_1$.
    \item \emph{Third Case}: Suppose that $(\mathcal{S}_i^{+}, \mathcal{S}_i^{-}) =(\mathcal{S'}_i^{+},\mathcal{S'}_i^{-})$ for all $i\in[p]$, $\mathbf{c} = \mathbf{c'}$ and $\left(\mathbf{A},\mathbf{\Tilde{A}}\right) \neq \left(\mathbf{A}',\mathbf{\Tilde{A}}'\right)$. Without loss of generality, we assume that $a_{11}\neq 0$, $a'_{11}\neq 0$ and $a_{11}\neq a'_{11}$. In this case, the contradiction $a_{11}= a'_{11}$ arises when selecting $\mathbf{u}= \mathbf{e}_{p,1} + \sum_{i\in\mathcal{S}_1^-} \mathbf{e}_{p,i}$ and $\mathbf{v},\mathbf{v}'$ as before.
\end{itemize}
\end{proof}

\section{A CLOSED-FORM ESTIMATOR FOR THE GENERIC BAR MODEL}
\label{sec: closed form 2}
Extending the closed-form estimator for the BAR model with only positive correlations, we now introduce a closed-form estimator for the generic BAR model. Consider the same introductions as before and also the Bernoulli argument for $X_i(k+1)$ in (\ref{eq: BAR_def2}). We can rewrite
\begin{align}\label{eq:BARargument-new}
&\mathbf{a}_i^\top\mathbf{X}(k)+\Tilde{\mathbf{a}}_i^\top\left(\mathbf{1}-\mathbf{X}(k)\right)+b_iW_i(k+1)=\nonumber\\
&(\mathbf{a}_i-\Tilde{\mathbf{a}}_i)^\top\mathbf{X}(k)+\Tilde{\mathbf{a}}_i^\top\mathbf{1}+b_iW_i(k+1)
\end{align}
and we note that due to the nonoverlapping  supports of $\mathbf{a}_i$ and $\Tilde{\mathbf{a}}_i$ for every $i\in [p]$, the vector $\mathbf{a}_i-\Tilde{\mathbf{a}}_i$ contains the entries of $\mathbf{a}_i$ and the entries of $\Tilde{\mathbf{a}}_i$ with flipped signs, each  at a different location. We further note that 
\begin{align}
P(X_i(k+1)=1|\mathbf{X}(k)=\mathbf{x}(k))&=(\mathbf{a}_i-\Tilde{\mathbf{a}}_i)^\top\mathbf{x}(k)+\Tilde{\mathbf{a}}_i^\top\mathbf{1}+b_i\rho_{w_i}\nonumber\\&=\bar{\mathbf{a}}_i^\top\mathbf{x}(k)+\bar{c}_i,
\end{align}
where $\bar{\mathbf{a}}_i=\mathbf{a}_i-\Tilde{\mathbf{a}}_i$ and $\bar{c}_i=\Tilde{\mathbf{a}}_i^\top\mathbf{1}+b_i\rho_{w_i}$ for $i\in[p]$. With these introductions, we can reparameterize the generic BAR model as $\bar{\mathbf{\theta}}=(\bar{\mathbf{A}},\bar{\mathbf{c}})$, where 
$\bar{\mathbf{A}}=[\bar{\mathbf{a}}_1,\ldots, \bar{\mathbf{a}}_p]^T$ and $\bar{\mathbf{c}}=[\bar{c}_1,\ldots, \bar{c}_p]^T$. Clearly, by knowing $\bar{\mathbf{A}}$, we can immediately separate $\mathbf{A}$ and $\Tilde{\mathbf{A}}$ based on the underlying signs of the entries. Furthermore, by also knowing $\bar{\mathbf{c}}$, we can compute the products $c_i=b_i\rho_{w_i}, i=1,\ldots, p$ and finally, we can compute $\rho_{w_i}$ via the formula
$\rho_{w_i}=c_i/(1-\sum_{j=1}^p(a_{ij}+\tilde{a}_{ij}))$ for $i\in [p]$. The parameterization $\bar{\mathbf{\theta}}=(\bar{\mathbf{A}},\bar{\mathbf{c}})$ is of the same form as the parameterization $\mathbf{\theta}=(\mathbf{A},\mathbf{c})$ of the BAR model with only positive correlations and therefore, the same unprojected closed-form estimator as before is suitable.

\begin{theorem}\label{thm: LS_red_genericBAR}
Consider an observed sequence $\{\mathbf{x}(k)\}_{k=0}^{T}$. For $i=1,\ldots, p$, define the estimator $\hat{\bar{\mathbf{c}}}=[\hat{\bar{c}}_1,\ldots, \hat{\bar{c}}_p]^T$ by the entry estimators $\hat{\bar{c}}_i=\sum_{k=0}^{T-1} \mathbb{I}(\mathbf{x}(k)=\mathbf{0}_p,x_i(k+1)=1)/\sum_{k=0}^{T-1} \mathbb{I}(\mathbf{x}(k)=\mathbf{0}_p)$, assuming that the state $\mathbf{0}_p$ is visited at least once in the time span $\{0,\ldots,T-1\}$. 
Furthermore, suppose that in $\{\mathbf{x}(k)\}_{k=0}^{T-1}$ there are $m$ distinct states $\mathbf{u}_1$, $\mathbf{u}_2$,..., $\mathbf{u}_m$ such that $p\leq m\leq 2^p$. Let $\mathbf{U}_m\in \mathbb{R}^{m\times p}$ be a matrix with $k$-th row equal to $\mathbf{u}_k^\top$ for $k\in[m]$ and $\mathbf{y}_{m,r} = \left[N_{\mathbf{u}_1,r,1}/N_{\mathbf{u}_1},\cdots, N_{\mathbf{u}_m,r,1}/N_{\mathbf{u}_m}\right]^\top$. Then, whenever $\mathbf{U}_{m}$ is full-column rank, $\hat{\bar{\mathbf{A}}}$ is an estimator of $\bar{\mathbf{A}}$, where
\begin{equation}\label{eq: LS_red2}
    \hat{\bar{\mathbf{a}}}_r = \left(\mathbf{U}_m^\top\mathbf{U}_m\right)^{-1}\mathbf{U}_m^\top\left(\mathbf{y}_{m,r}-\hat{\bar{c}}_r\cdot \mathbf{1}_m\right), \ \ \forall r\in [p].
\end{equation}
Based on the signs of the entries in  $\hat{\bar{\mathbf{A}}}$, we can separate $\hat{\mathbf{A}}$ and $\hat{\Tilde{\mathbf{A}}}$. Moreover, $\hat{\mathbf{b}}=\mathbf{1}_p-(\hat{\mathbf{A}}+\hat{\Tilde{\mathbf{A}}})\mathbf{1}_p$, $\hat{\mathbf{c}}=\hat{\bar{\mathbf{c}}}-\hat{\Tilde{\mathbf{A}}}\mathbf{1}_p$ and $\hat{\mathbf{\rho}}_w= \left(\mathrm{diag}(\hat{\mathbf{b}})\right)^{-1}\hat{\mathbf{c}}$. 
Finally, to obtain a valid estimate of the parameter set for any $T\geq 1$, we let 
$
\hat{\bar{\mathbf{\theta}}}=\left[\left(\hat{\mathbf{A}},\hat{\Tilde{\mathbf{A}}},  \hat{\mathbf{b}},\hat{\mathbf{\rho}}_w\right)\right]^{+},
$
where $[\cdot]^{+}$ corresponds to a projection onto the parameter space $\tilde{\Theta}$ by preserving the supports of $\hat{\mathbf{A}}$ and $\hat{\Tilde{\mathbf{A}}}$.  
\end{theorem}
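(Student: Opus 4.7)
The approach is to directly mimic the proof of Theorem \ref{thm: LS_red}, leveraging the fact that under the reparameterization $\bar{\theta}=(\bar{\mathbf{A}},\bar{\mathbf{c}})$ suggested by (\ref{eq:BARargument-new}), the conditional success probability $P(X_i(k+1)=1|\mathbf{X}(k)=\mathbf{x}(k))=\bar{\mathbf{a}}_i^\top\mathbf{x}(k)+\bar{c}_i$ is affine in $\mathbf{x}(k)$, exactly as in the positive-correlation case. The only structural difference is that the entries of $\bar{\mathbf{a}}_i$ may be negative at positions corresponding to $\mathrm{supp}(\tilde{\mathbf{a}}_i)$; this has no bearing on the derivation of the estimator, only on how one reads off $\mathbf{A}$ and $\tilde{\mathbf{A}}$ at the end.

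First, I would rewrite the log-likelihood in terms of the marginal conditional probabilities $\vartheta_{\mathbf{u},r,l}=P((\cdot)_r=l|\mathbf{u})$ and perform the same unconstrained ML optimization as in Theorem \ref{thm: LS_red}, yielding $\hat{\vartheta}_{\mathbf{u},r,l}=N_{\mathbf{u},r,l}/N_{\mathbf{u}}$. Imposing the invariance property then produces the system $N_{\mathbf{u}_k,r,1}/N_{\mathbf{u}_k}=\mathbf{u}_k^\top\hat{\bar{\mathbf{a}}}_r+\hat{\bar{c}}_r$ for every observed state $\mathbf{u}_k$. Specializing to $\mathbf{u}=\mathbf{0}_p$ isolates $\hat{\bar{c}}_r=N_{\mathbf{0}_p,r,1}/N_{\mathbf{0}_p}$, confirming the first estimator in the statement; the hypothesis that $\mathbf{0}_p$ is visited at least once guarantees well-definedness, and persistent excitation ($b_i\geq b_{min}>0$) ensures this occurs almost surely for large $T$. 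Substituting into the remaining equations and collecting them in matrix form gives $\mathbf{y}_{m,r}-\hat{\bar{c}}_r\mathbf{1}_m=\mathbf{U}_m\hat{\bar{\mathbf{a}}}_r$, and the least-squares solution under the full-column-rank assumption on $\mathbf{U}_m$ yields (\ref{eq: LS_red2}).

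Next, the separation of $\hat{\mathbf{A}}$ and $\hat{\tilde{\mathbf{A}}}$ from $\hat{\bar{\mathbf{A}}}$ follows from the disjoint-support constraint $a_{ij}\tilde{a}_{ij}=0$ built into $\tilde{\Theta}$: since $\bar{\mathbf{a}}_i=\mathbf{a}_i-\tilde{\mathbf{a}}_i$ with both summands componentwise nonnegative and sharing no common support, the positive entries of $\bar{\mathbf{a}}_i$ coincide with the entries of $\mathbf{a}_i$ and the absolute values of the negative entries coincide with those of $\tilde{\mathbf{a}}_i$. Hence $\hat{a}_{ij}=\max(\hat{\bar{a}}_{ij},0)$ and $\hat{\tilde{a}}_{ij}=\max(-\hat{\bar{a}}_{ij},0)$. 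The formula $\hat{\mathbf{b}}=\mathbf{1}_p-(\hat{\mathbf{A}}+\hat{\tilde{\mathbf{A}}})\mathbf{1}_p$ is a restatement of the row-sum constraint (\ref{eq: BAR_const2}); $\hat{\mathbf{c}}=\hat{\bar{\mathbf{c}}}-\hat{\tilde{\mathbf{A}}}\mathbf{1}_p$ inverts the identity $\bar{c}_i=\tilde{\mathbf{a}}_i^\top\mathbf{1}+c_i$; and $\hat{\rho}_w=(\mathrm{diag}(\hat{\mathbf{b}}))^{-1}\hat{\mathbf{c}}$ inverts $\mathbf{c}=\mathrm{diag}(\mathbf{b})\rho_w$.

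The main subtlety, rather than a deep obstacle, is the finite-sample validity of the sign-based separation: a raw least-squares estimate can place an entry of $\hat{\bar{\mathbf{A}}}$ on the wrong side of zero, producing an incorrect support assignment, and $\hat{\mathbf{b}}$ or $\hat{\rho}_w$ computed from raw estimates may violate the bounds defining $\tilde{\Theta}$. The final projection $[\cdot]^{+}$ onto $\tilde{\Theta}$ is precisely what absorbs this issue, and the instruction to preserve the supports of $\hat{\mathbf{A}}$ and $\hat{\tilde{\mathbf{A}}}$ after projection ensures the result is a bona fide element of $\tilde{\Theta}$ (row-stochastic with disjoint positive/negative supports). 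I do not anticipate any steps requiring substantial new machinery beyond what the proof of Theorem \ref{thm: LS_red} provides; asymptotic support recovery would, in analogy to Theorem \ref{thm: LS_constistency}, rely on an Ergodic Theorem argument ensuring that $\hat{\bar{\mathbf{A}}}\to\bar{\mathbf{A}}$ almost surely, after which the strict nonzero assumption on true nonzero entries makes the sign assignment eventually correct.
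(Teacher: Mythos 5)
Your proposal matches the paper's intent exactly: the paper gives no separate proof, stating that the result follows straightforwardly from the proofs of Theorems \ref{thm: LS_red} and \ref{thm: LS_constistency} after the reparameterization $\bar{\theta}=(\bar{\mathbf{A}},\bar{\mathbf{c}})$, which is precisely the route you take (invariance-based least squares, sign-based separation of $\hat{\mathbf{A}}$ and $\hat{\tilde{\mathbf{A}}}$, recovery of $\hat{\mathbf{b}},\hat{\mathbf{c}},\hat{\rho}_w$, and projection onto $\tilde{\Theta}$). Your argument is correct and essentially the same as the paper's.
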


The proofs of this theorem and of the strong consistency of the proposed closed-form estimator are straightforward based on the proofs of Theorems \ref{thm: LS_red}
and \ref{thm: LS_constistency}, respectively.

\section{SIMULATION RESULTS}
\label{sec:sims}

To validate our analysis, experiments on synthetic networks of various sizes are performed. Focusing only on the structure identification of the underlying BAR graphs, we compare the ML estimators with the closed-form estimators and the BAR structure observer (BARobs) proposed in \cite{katselis2018mixing}.  Our simulations show that the ML estimator either for the BAR model with positive correlations only or for the generic BAR model can perfectly recover the graph structure with sufficient data and outperforms the other estimators in terms of sample complexity.

\begin{figure}
\centering
\includegraphics[scale=0.255]{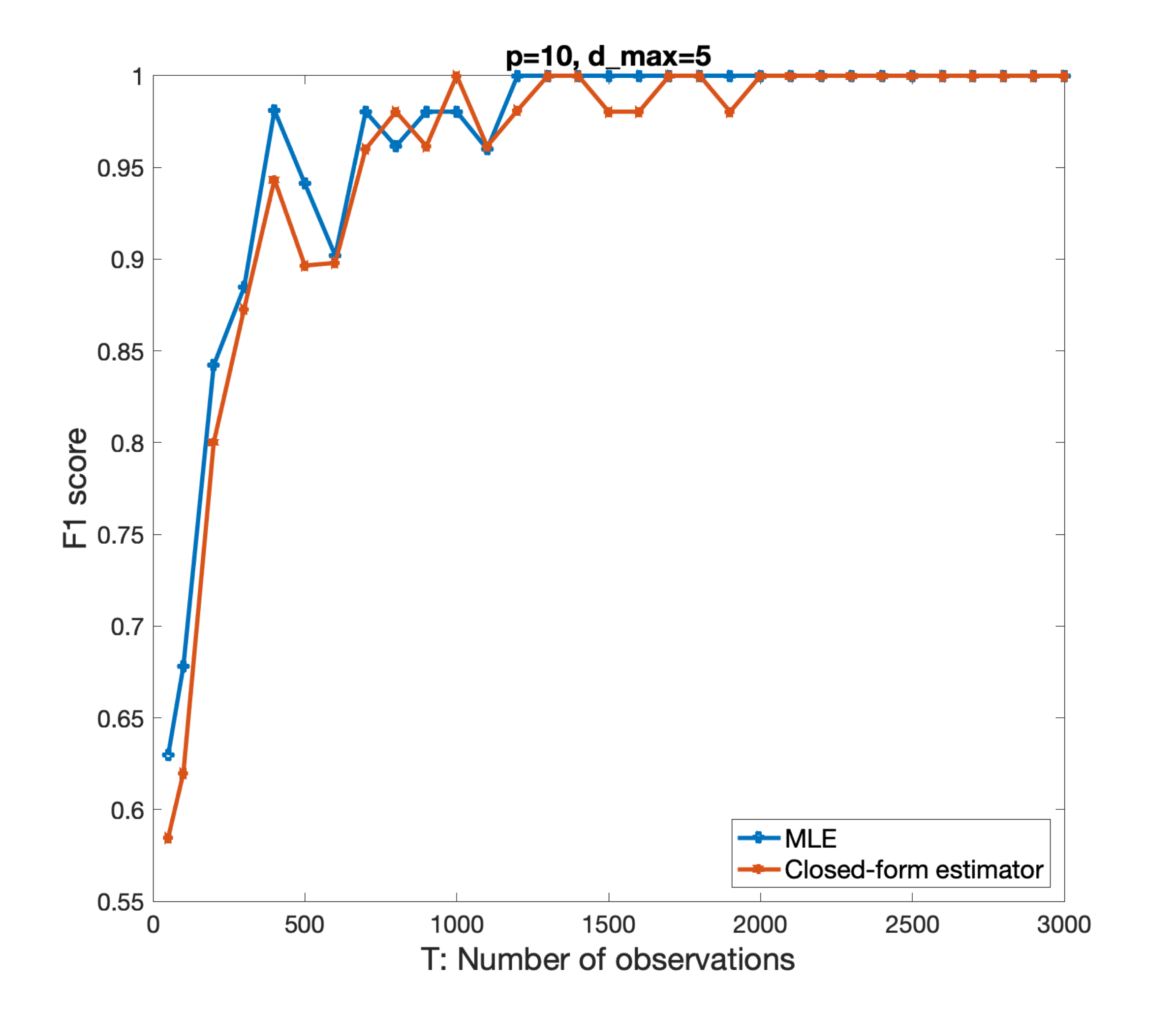}
\caption{$p=10$, $d_{max}=5$: $F_1$ scores for the ML and closed-form estimators.}
 \label{fig: MLEvsBARobs_a}
\end{figure} 

\begin{figure}
\centering
\includegraphics[scale=0.24]{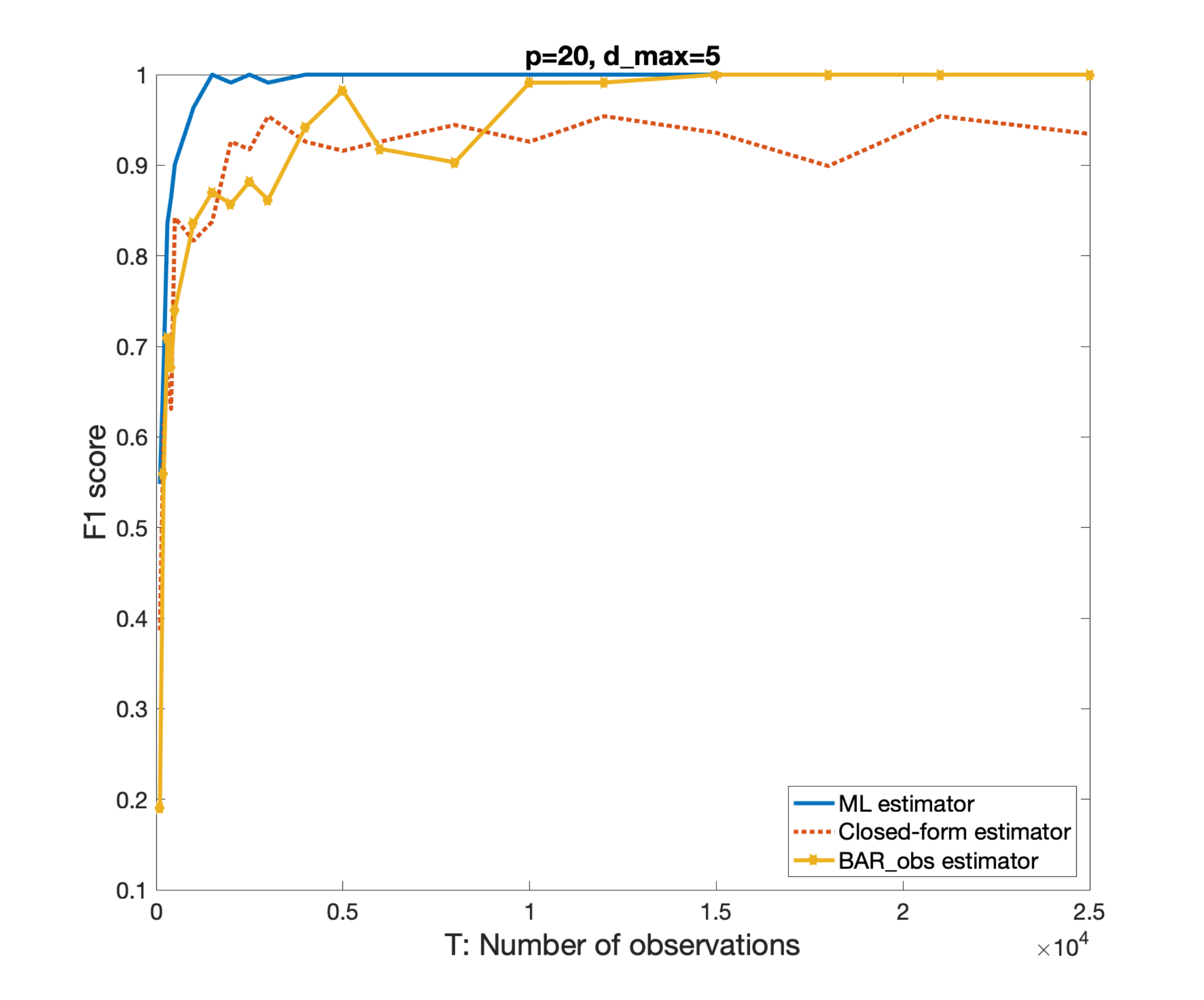}
\caption{$p=20$, $d_{max}=5$: $F_1$ scores for the ML,  closed-form and BARobs estimators.}
 \label{fig: MLEvsBARobs_c}
\end{figure} 

\begin{figure}
\centering
\includegraphics[scale=0.24]{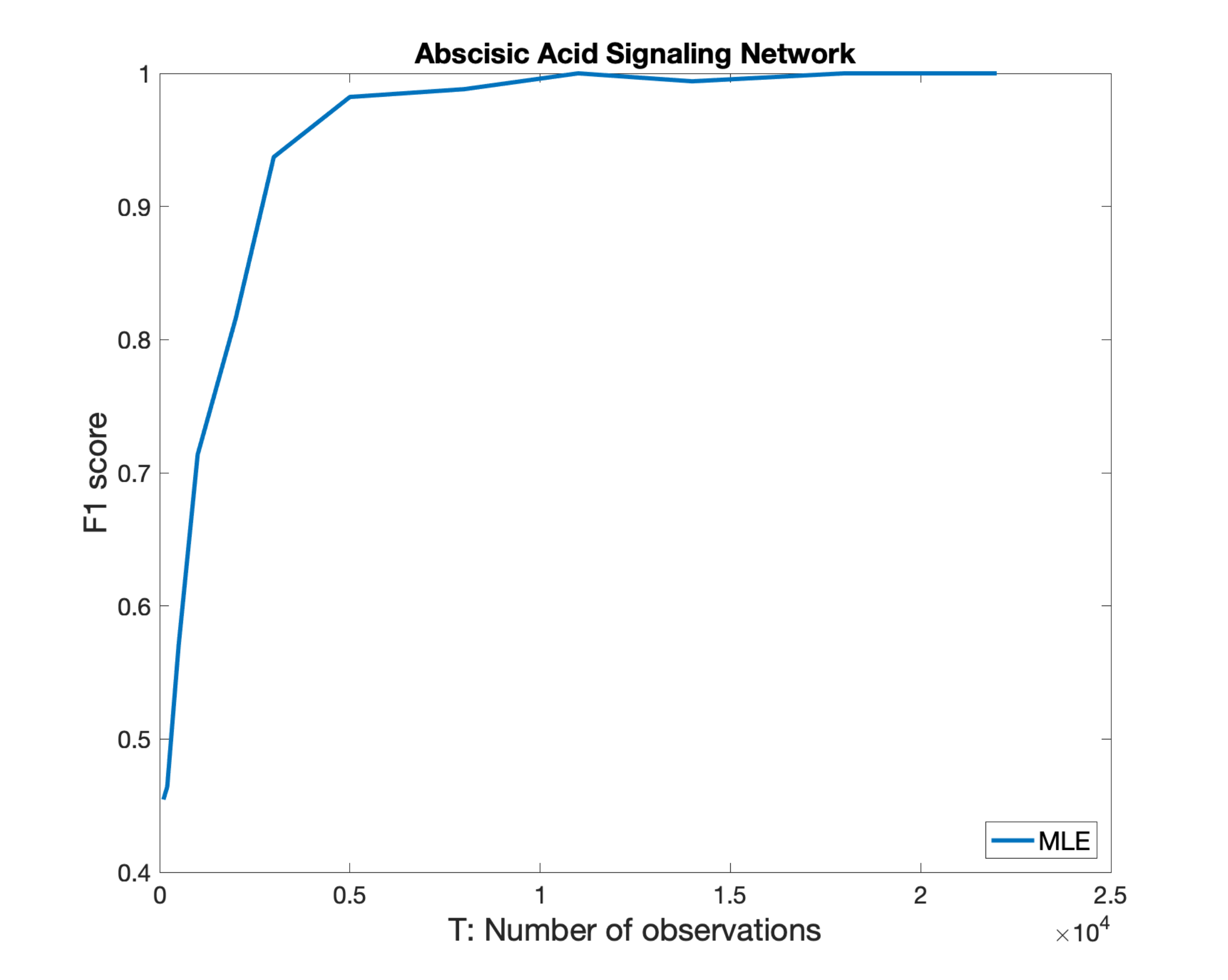}
\caption{A biological network with $43$ nodes: $F_1$ score for the ML estimator.}
   \label{fig: Bio}
\end{figure} 

We implement all experiments in MATLAB. In particular for the ML estimators, we use the ``fmincon'' function in the ``optimization toolbox'' to solve the maximization problems. The ground truth networks $\left(\mathbf{A},\mathbf{b},\rho_w\right)$ or $\left(\mathbf{A},\mathbf{\Tilde{A}},\mathbf{b},\rho_w\right)$ are randomly generated in such a way that all constraints specifying the parameter spaces $\Theta$ or $\Tilde{\Theta}$ are satisfied. Moreover, to facilitate the simulation, we create networks with minimum edge weight $a_{min}\in (0,1)$, i.e., $a_{ij}\geq a_{min}, \forall (j,i)\in\mathcal{E}$ in the case of the BAR model (\ref{eq: BAR_def}) and either $a_{ij}\geq a_{min}$ or $\Tilde{a}_{ij}\geq a_{min}, \forall (j,i)\in\mathcal{E}$ in the case of the generic BAR model (\ref{eq: BAR_def2}). The data $\{\mathbf{x}(k)\}_{k=0}^{T}$ are generated according to the created ground truth models and then are used for network inference. We use the $F_1$ score, defined by
\begin{equation*}
    F_1 = \frac{2}{\text{recall}^{-1}+ \text{precision}^{-1}},
\end{equation*}
as the criterion to evaluate the performance of the algorithms. We note that \emph{recall} is the fraction of correctly recovered edges among the ground truth edges and \emph{precision} is the fraction of correctly recovered edges over all correct and incorrect edges identified by the algorithms.  More specifically,  an edge $(j,i)$ is viewed as being inferred if $\hat{a}_{ij}\geq ca_{min}$ or $\hat{\tilde{a}}_{ij}\geq ca_{min}$ for some empirically selected $c\in(0,1)$.

In Fig. \ref{fig: MLEvsBARobs_a}, we compare the performance of ML and closed-form estimators for the BAR model (\ref{eq: BAR_def}) with $p=10$ nodes and maximum in-degree $d_{max}=5$. In Fig. \ref{fig: MLEvsBARobs_c}, the performance of the ML, closed-form and BARobs estimators is demonstrated for a synthetic network with size $p=20$ and maximum in-degree $d_{max}=5$ corresponding to the generic BAR model (\ref{eq: BAR_def2}).  The $x$-axis corresponds to the number of observations $T$ and the $y$-axis to the $F_1$ score. In both cases, the ML estimator can achieve $F_1$ scores equal to $1$ for a sufficient sample size, e.g., $T=1200$ in Fig. \ref{fig: MLEvsBARobs_a} and $T=4000$ in Fig. \ref{fig: MLEvsBARobs_c}. In Fig. \ref{fig: MLEvsBARobs_c}, the BARobs algorithm also achieves an $F_1$ score equal to $1$ for a sufficiently large sample size. However, compared to the ML estimator, the BARobs requires a larger sample size. Finally, in Fig. \ref{fig: MLEvsBARobs_c}  both the ML and BARobs estimators outperform the closed-form estimator in (\ref{eq: LS_red2}).

Finally, focusing on the ML estimator which outperforms the other two estimators in terms of sample complexity, we present a real network experiment  in a biological application in which small sample sizes are critical \cite{SIGNET}. The underlying biological network consists of 43 nodes. The state of each node is binary and is updated according to a boolean rule defined by the states of some nodes in the network. We approximate the network by the generic BAR model (\ref{eq: BAR_def2}) and we generate pseudo-real data.  Fig. \ref{fig: Bio} illustrates the performance of the ML estimator on this data set.

\section{CONCLUSIONS}
\label{sec:concl}

In this paper, we studied the problem of estimating the parameters of a class of Markov chains  called BAR models. ML estimation for BAR chains was shown to be strongly consistent. Strong consistency was also established for closed-form estimators of these BAR models.






\section*{ACKNOWLEDGMENT}
This research has been supported in part by NSF Grants  NeTS 1718203,  CPS ECCS 1739189,  ECCS 16-09370,  CCF 1934986,  NSF/USDA Grant AG 2018-67007-28379,  ARO W911NF-19-1-0379, ECCS 2032321 and ONR Grant Navy N00014-19-1-2566.


\bibliographystyle{IEEEtran}
\bibliography{samples}

\end{document}